\newtheorem{theo}{Th\'eor\`eme}
\newtheorem{lemme}{Lemme}
\newtheorem{proposition}{Proposition}
\newtheorem{corollaire}{Corollaire}
\newtheorem{remarque}{Remarque}
\newtheorem{definition}{D\'efinition}
\numberwithin{theo}{section} \numberwithin{corollaire}{section}
\numberwithin{proposition}{section} \numberwithin{lemme}{section}
\numberwithin{equation}{section} \numberwithin{remarque}{section}
\numberwithin{definition}{section}
\newcommand\R{{\mathbb{R}}}
\newcommand\Z{{\mathbb{Z}}}
 \def\ps@notecras{%

  \let\@mkboth\@gobbletwo
\let\sectionmark\@gobble
\let\subsectionmark\@gobble}
\date{}
\title{Existence et unicit\'e globale pour le syst\`eme de Navier-Stokes
axisym\'etrique anisotrope}
\author{Hammadi Abidi \footnote{D\'epartement de Math\'ematiques
Facult\'e des Sciences de Tunis
Campus universitaire 2092 Tunis, Tunisia.
habidi@univ-evry.fr}
 \; et\, Marius Paicu \footnote{Laboratoire de Math\'ematique
Universit\'e Paris Sud
B\^atiment 425,
91405 Orsay France
marius.paicu@math.u-psud.fr}}
\begin{document}
\maketitle

\noindent{\bf Abstract~:}
{\it We study in this paper the axisymmetric $3$-D Navier-Stokes system where the horizontal viscosity is zero. We prove the existence of a unique global
solution to the system with initial data of Yudovitch type.}
 \vskip 0.5cm
\noindent{\bf R\'esum\'e~:}
{\it Nous \'etudions dans ce papier le
syst\`eme de Navier-Stokes $3$-D axisym\'etrique avec viscosit\'e horizontale nulle. Nous allons prouver que le
syst\`eme est globalement bien pos\'e pour des donn\'ees de type Yudovitch.}
\vskip 0.6cm

\noindent{\bf{\footnotesize{AMS Subject Classifications}}}~: 35Q30 (35Q35 76D03 76D05 76D09)\\
{\bf{\footnotesize{Keywords}}}~: Navier-Stokes anisotrope; Existence globale; Unicit\'e.

\section{Introduction}
L'\'ecoulement tridimensionnel d'un fluide homog\`ene visqueux incompressible est r\'egi par les \'equations de Navier-Stokes que 
nous rappelons ici~:

$$
{\rm(NS)}\;\left\{
\begin{array}{rl}
&\hspace{-0,5cm}\partial_t u +(u\cdot \nabla) u
-\nu_h(\partial_{x}^2+\partial_{y}^2) u
-\nu_v\partial_{z}^2u= -\nabla p \\
&\hspace{-0,5cm}{\mathop{\rm div}}\,u=0 \\
&\hspace{-0,5cm}u_{|t=0}=u_0.
\end{array}
\right.
$$
Ci-dessus $\nu_h$ (resp. $\nu_v$) repr\'esente la viscosit\'e 
horizontale (resp. verticale), la vitesse $u$ est un
champ de vecteurs inconnu d\'ependant du temps $t$
et de la variable d'espace $x\in\R^3$ et $\nabla p$ correspond au gradient de la pression
et peut \^etre interpr\'et\'e comme le multiplicateur de Lagrange 
associ\'e \`a la contrainte d'incompressibilit\'e $\mathop{\rm div}\,u=0.$

Dans le cas ou les coefficients de viscosit\'e $\nu_h$ et $\nu_v$ sont strictement positives, on sait que le syst\`eme $(\rm{NS})$ admet une solution globale dans l'espace d'\'energie $L^2$ d'apr\`es les travaux de J. Leray \cite{L}. Ensuite dans les ann\'ees soixante H. Fujita et  T. Kato \cite{FK} ont d\'emontr\'e, par des techniques de semi-groupe que 
$(\rm{NS})$ est localement bien pos\'e pour des donn\'ees initiales 
dans l'espace de Sobolev homog\`ene $\dot H^{\frac{1}{2}}.$
L'existence globale est \'etablie pour des donn\'ees petites devant 
$\inf\{\nu_h,\nu_v\}.$ D'autres r\'esultats semblables ont \'et\'e prouv\'es dans des espaces fonctionnels qui sont tous invariants par changement d'\'echelle de l'\'equation consid\'er\'ee (voir par exemple \cite{CMP} et \cite{KT}).

Dans le cas ou $\nu_h>0$ et $\nu_v=0$ le syst\`eme $(\rm{NS_h})$ a \'et\'e \'etudi\'ee pour la premi\`ere fois par J.-Y. Chemin et al. \cite{Chemin}. Plus exactement ont
d\'emontr\'e l'existence locale en temps d'une solution, lorsque la 
donn\'ee initiale est dans l'espace de Sobolev anisotrope 
$H^{0,{1\over2}+},$ avec 
$H^{0,s}=\big\{u\in L^2\;\big|\;
(\int_{\R^2}\|u(x,y,\cdot)\|^2_{H^s(\R)}dxdy)^{1\over2}<\infty\big\}.$
L'existence globale est \'etablie pour des donn\'ees petites devant la viscosit\'e $\nu_h.$ Par contre l'unicit\'e a \'et\'e prouv\'e pour des 
donn\'ees dans $H^{0,{3\over2}+}.$
Notons que l'unicit\'e dans le cas o\`u la donn\'ee 
$u_0\in H^{0,{1\over2}+}$ a \'et\'e obtenue par D. Iftimie \cite{Iftimie}. Ensuite M. Paicu \cite{Paicu}, a d\'emontr\'e que le syst\`eme 
$(\rm{NS_h})$ est localement bien pos\'e dans l'espace de Besov anisotrope \hbox{${\mathscr{B}}^{0,{1\over2}}=\big\{u\in{\mathcal{S}}'\big|
\displaystyle\sum_{q\in\Z}(\int_{2^{q-1}\le|z|\le2^q}|z|\|{\mathcal{F}}u(\cdot,\cdot,z)\|_{L^2(\R^2)}^2dz)^{1\over2}<\infty\big\},$} l'existence globale a \'et\'e prouv\'e pour des donn\'ees petites devant $\nu_h.$ 
R\'ecemment J.-Y. Chemin et P. Zhang \cite{Ping} ont  obtenu un
r\'esultat similaire en travaillant dans un espace de Besov anisotrope
d'indice n\'egatif.

Dans la suite, on suppose que le fluide est uniquement verticalement visqueux, c'est-\`a-dire, que $\nu_h=0$ et $\nu_v>0.$
Dans cette partie on ne s'int\'eressera pas \`a la d\'ependance par rapport \`a la viscosit\'e $\nu_v$ des quantit\'es \`a mesurer, et l'on supposera donc pour simplifier que $\nu_v=1.$
Dans ce cas le syst\`eme devient~:
$$
{\rm(NS_v)}\;\left\{
\begin{array}{rl}
&\hspace{-0,5cm}\partial_t u +(u\cdot \nabla) u-\partial_{z}^2u
=-\nabla p \\
&\hspace{-0,5cm}{\mathop{\rm div}}\,u=0 \\
&\hspace{-0,5cm}u_{|t=0}=u_0.
\end{array}
\right.
$$
Rappelons que dans le cas ou $\nu_h>0$ et $\nu_v=0,$ la condition d'incompressibilit\'e, c'est-\`a-dire, $\partial_xu^1+\partial_yu^2+\partial_zu^3=0,$ a permis aux auteurs de prouver un effet r\'egularisant pour la troisi\`eme composante $u^3$ \`a partir du laplacien horizontal. Par contre dans notre cas on a  un seul effet r\'egularisant qui rend l'\'etude du syst\`eme tr\`es difficile. Pour cela on s'int\'eresse \`a des solutions particuli\`eres, plus exactement des solutions axisym\'etriques, puisque dans se cas, on a
${\mathop{\rm div}}\,u=\partial_ru^r+{u^r\over r}+\partial_zu^z=0.$
Avant de donner plus de d\'etails, il convient de pr\'eciser  ce que nous entendons par donn\'ees et solutions axisym\'etriques.
\begin{definition}
On dit qu'un  champ de vecteurs $u$ est axisym\'etrique si et seulement si il poss\`ede une sym\'etrie cylindrique de r\'eflexion, c'est-\`a-dire,
$$
u=u^r(r,z)e_r+u^z(r,z)e_z
$$
o\`u
$\big(e_r, e_{\theta} , e_z\big)$ est la base cylindrique.\\
Une fonction scalaire est dite axisym\'etrique si elle ne d\'epend pas de la variable angulaire $\theta.$
\end{definition}
Le syst\`eme de Navier-Stokes classique 
(dans le cas $\nu_h=\nu_v>0$) a d\'ej\`a  \'et\'e \'etudi\'e  par plusieurs auteurs, le premiers r\'esultats \'etant dues \`a  M. Ukhovskii et V. Youdovitch \cite{UY} et O. A. Ladyzhenskaya \cite{LA}. 
 
Dans ce cas la vorticit\'e de $u$ que est d\'efinie par 
$\omega:=\nabla\times u,$ admet dans le rep\`ere cylindrique 
une seule composante port\'ee par $e_{\theta}$:
$$
\omega=\omega^{\theta} e_{\theta}\hspace{0,5cm}\mbox{avec}
\hspace{0,5cm}\omega^{\theta}=\partial_zu^r-\partial_ru^z
$$
et qui v\'erifie l'\'equation suivante:
$$
\partial_t \omega +(u^r\partial_r+ u^z\partial_z)\omega - \frac{u^r}{r}\omega-\partial^2_z\omega=0,
$$
et par suite $\omega/r$ v\'erifie l'\'equation de trasport-diffusion:
$$
\partial_t {\omega\over r} +(u^r\partial_r+ u^z\partial_z){\omega\over r}
- \partial^2_z{\omega\over r}=0.
$$
Il est alors possible de montrer par une m\'ethode d'\'energie que pour tout $p\in[1,\infty]$
(resp. $p\in]1,2]$) la norme de $\omega/r$ (resp. $r^{-1}\partial_z\omega$) dans $L^p$
(resp. $L^2_t(L^p)$) est contr\^ol\'ee par celle de ${\omega_0}/r.$
D'apr\`es la loi de Biot-Savart, on d\'emontre (voir Proposition \ref{Biot}) que
$$
|{u^r\over r}|
\lesssim
{1\over |\cdot|}\star|r^{-1}\partial_{z}\omega|.
$$
Ainsi la condition d'incompressibilit\'e nous permet de contr\^oler 
$\partial_ru^r$ puisque $\partial_ru^r=-{u^r\over r}-\partial_zu^z.$
Notre r\'esultat principal est le suivant (concernant la
d\'efinition de l'espace de Lorentz voir la section suivante):
\begin{theo}\label{existence_globale}
Soit $\omega_0\in L^{{3\over 2},1}(\R^3)$ tel que
${\omega_0\over r}\in L^{{3\over 2},1}(\R^3).$ Soit $u_0$ le champ de
vecteurs avec ${\mathop{\rm div}}\,u_0=0$ et 
$\omega_0=\nabla\times u_0$
donn\'e par la loi de Biot-Savart~:
$$
u_0(X)={1\over4\pi}\int_{\R^3}\frac{(X-Y)\times\omega_0(Y)}
{\vert X-Y\vert^3}\,dY.
$$
Alors le syst\`eme ${\rm(NS_v)}$ admet une solution globale $u$ tel que la la vorticit\'e $\omega$ satisfait
$$
\begin{aligned}
&\omega\in L^\infty_{loc}\big(\R_+;\,L^{{3\over 2},1}(\R^3)\big),
\hspace{1cm}
\partial_z\omega\in L^2_{loc}\big(\R_+;\,L^{{3\over 2},1}(\R^3)\big)
\\&
{\omega\over r}\in L^\infty_{loc}\big(\R_+;\,L^{{3\over 2},1}(\R^3)\big),\hspace{0,9cm}
\partial_z{\omega\over r}\in L^2_{loc}\big(\R_+;\,L^{{3\over 2},1}(\R^3)\big).
\end{aligned}
$$
De plus pour tout $t\geq 0,$ on a
$$
\|\omega(t)\|_{L^{{3\over 2},1}}
+\|\partial_z\omega\|_{L^2_t(L^{{3\over 2},1})}
\leq
C\|\omega_0\|_{L^{{3\over 2},1}}
\exp\big(Ct^{1\over2}\|r^{-1}\omega_0\|_{L^{{3\over 2},1}}\big)
$$
et
$$
\|r^{-1}\omega(t)\|_{L^{{3\over 2},1}}
+\|r^{-1}\partial_z\omega\|_{L^2_t(L^{{3\over 2},1})}
\leq
C\|r^{-1}\omega_0\|_{L^{{3\over 2},1}}.
$$
En outre, cette solution est unique si de plus 
$\partial_r\omega_0\in L^{{3\over2},1}.$ 
\end{theo}
\begin{remarque}\label{comparaison}
Rappelons que  pour des donn\'ees initiales de type Yudovitch 
R. Danchin \cite{Danchin} \`a d\'emontre que le syst\`eme d'Euler axisym\'etrique est globalement bien pose. Plus exactement il 
d\'emontre que le syst\`eme est globalement bien pos\'e lorsque 
$\omega_0\in L^{3,1}\cap L^\infty$ et $\omega_0/r\in L^{3,1}.$ 
R\'ecemment  H. Abidi et {\it al.} \cite{AHK} ont montr\'e que le 
syst\`eme d'Euler axisym\'etrique est globalement bien pose dans des espace critiques plus pr\'ecis\'ement lorsque 
$u_0\in B^{{3\over p}+1}_{p,1}$ pour 
$p\in[1,\infty]$ et $\omega_0/r\in L^{3,1}.$
\end{remarque}
\begin{remarque}
On note aussi quand obtient un  r\'esultat similaire que H. Abidi \cite{A}. En effet, dans cet article, l'auteur d\'emontre que le syst\`eme de Navier-Stokes axisym\'etrique 
(i.e, $\nu_h=\nu_v>0$) est globalement bien pos\'e lorsque la 
donn\'ee initiale v\'erifie  $u_0\in W^{2,p}(\R^3)$ pour $1<p<2$. 
\end{remarque}
Nous pouvons obtenir l'existence des solutions pour des donn\'ees initiales de r\'egularit\'e encore plus faible. L'unicit\'e en revanche semble \^etre beaucoup plus difficile \`a obtenir avec cette r\'egularit\'e tr\`es faible. Nous avons le r\'esultat suivant.
\begin{theo}\label{th2}
Soit $\omega_0\in L^{\frac 65}\cap L^{{6\over 5}+,1}(\R^3)$ tel que
${\omega_0\over r}\in L^{\frac 65}\cap L^{{6\over 5}+,1}(\R^3).$ Soit $u_0$ le champ de
vecteurs avec ${\mathop{\rm div}}\,u_0=0$ et 
$\omega_0=\nabla\times u_0$
donn\'e par la loi de Biot-Savart. Alors le syst\`eme ${\rm(NS_v)}$ admet une solution globale $u$ tel que
la la vorticit\'e $\omega$ satisfait
$$
\begin{aligned}
\big(\omega,\frac{\omega}{r}\big)\in L^\infty_{loc}
\big(\R_+;\,L^{\frac 65}\cap L^{{6\over 5}+,1}(\R^3)\big),
\hspace{1cm}
\big(\partial_z\omega, \partial_z\frac{\omega}{r}\big)\in 
L^2_{loc}\big(\R_+;\,L^{\frac 65}\cap L^{{6\over 5}+,1}(\R^3)\big).
\end{aligned}
$$
\end{theo}

\section{Notation et pr\'eliminaires}
On dit que $A\lesssim B$ s'il existe une constante $C$
strictement positive telle que $A\leq CB.$ La notation $C$ d\'esigne une constante g\'en\'erique qui peut changer d'une ligne \`a une autre.
Soient $X$ un espace de Banach et $p\in[1,\infty],$ on d\'esigne par $L^p(0,T;\,X)$ l'ensemble des
fonctions $f$ mesurables sur $(0,T)$ \`a valeurs dans $X,$ telles
que $t\longmapsto\Vert f(t)\Vert_X$ appartient \`a $L^p(0,T).$
On note $C([0,T);\,X)$ l'espace des fonctions continues de $[0,T)$ \`a
valeurs dans $X,$ $C_b([0,T);\, X)\overset{d\acute{e}f}{=}C([0,T);\,X)\cap L^{\infty}(0,T;\,X).$
Enfin on d\'esigne par $p'$ l'exposant conjugu\'e de $p$ d\'efini par
$\frac{1}{p}+\frac{1}{p'}=1.$

Avant d'introduire la d\'efinition de l'espace de Lorentz, on commence par rappel la r\'earrangement d'une fonction. Soit $f$ une fonction mesurable, on d\'efinit son
r\'earrangement $f^{*}:\R_+\to \R_+$ par la formule
$$
f^{*}(\lambda):=\inf\Big\{s\geq0;\,\big|\{x/\,|f(x)|>s\}\big|\leq\lambda\Big\}.
$$
\begin{definition} (espace de Lorentz)\label{espace_lorentz}
Soient $f$ une fonction mesurable et
$1\leq p,q\leq\infty.$
Alors $f$ appartient a l'espace de Lorentz $L^{p,q}$ si
\begin{displaymath}
\|f\|_{L^{p,q}}\overset{d\acute{e}f}{=}\begin{cases}
            \Big( \int^\infty_0(t^{1\over p}f^*(t))^q{dt\over t}\Big)
            ^{1\over q}<\infty&
\text{si $q<\infty$}\\
             \displaystyle\sup_{t>0}t^{1\over p}f^*(t)<\infty &\text
             {si $q=\infty$}.
        \end{cases}
\end{displaymath}
\end{definition}
Nous pouvons \'egalement d\'efinir les espaces de Lorentz comme interpolation r\'eelle des espaces de Lebesgue~:
$$
L^{p,q}:= (L^{p_0},L^{p_1})_{(\theta,q)},
$$
avec $1\le p_0<p<p_1\le\infty,$ $0<\theta<1$ satisfait
${1\over p}={1-\theta\over p_0}+{\theta\over p_1}$ et $1\leq q\leq\infty,$ muni de la norme
$$
\|f\|_{L^{p,q}}:=
\Big(\int_0^\infty\big(t^{-\theta}K(t,f)\big)^q
{dt\over t}\Big)^{1\over q}
$$
avec
$$
K(f,t):=\displaystyle\inf_{f=f_0+f_1}
\big\{\|f_0\|_{L^{p_0}}+t\|f_1\|_{L^{p_1}}\;\,\big|
\;f_0\in L^{p_0},\,f_1\in L^{p_1}\big\}.
$$

L'espace de Lorentz v\'erifie  les propri\'et\'es suivantes 
(pour plus de d\'etails voir \cite{ON})~:
\begin{proposition}\label{Neil}
Soient $f\in L^{p_1,q_1},$ $g\in L^{p_2,q_2}$ et
$1\leq p,q,p_j,q_j\leq\infty,$ pour $1\leq j\leq2.$
\vspace{0,5cm}

\begin{itemize}

\item
Si ${1\over p}={1\over p_1}+{1\over p_2}$ et
${1\over q}={1\over q_1}+{1\over q_2},$ alors
$$
\|fg\|_{L^{p,q}}
\lesssim
\|f\|_{L^{p_1,q_1}}\|g\|_{L^{p_2,q_2}}.
$$

\item
Si $1<p<\infty,$ ${1\over p}+1={1\over p_1}+{1\over p_2}$ et
${1\over q}={1\over q_1}+{1\over q_2},$ alors
$$
\|f\ast g\|_{L^{p,q}}
\lesssim
\|f\|_{L^{p_1,q_1}}\|g\|_{L^{p_2,q_2}},
$$
pour $p=\infty,$ et ${1\over q_1}+{1\over q_2}=1,$ alors
$$
\|f\ast g\|_{L^\infty}
\lesssim
\|f\|_{L^{p_1,q_1}}\|g\|_{L^{p_2,q_2}}.
$$

\item
Pour $1\leq p\leq\infty$ et $1\leq q_1\leq q_2\leq\infty,$ on a
$$
L^{p,q_1}\hookrightarrow L^{p,q_2}
\hspace{1cm}\mbox{et}\hspace{1cm}L^{p,p}=L^p.
$$
\end{itemize}

\end{proposition}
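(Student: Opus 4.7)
The plan is to exploit the two equivalent descriptions of $L^{p,q}$ recalled above, namely the rearrangement formula and the real-interpolation identity $L^{p,q}=(L^{p_0},L^{p_1})_{\theta,q}$. Each of the three assertions then reduces to a Lebesgue endpoint statement, followed by a standard interpolation or rearrangement argument.

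I would first dispose of the elementary identifications in the third bullet. The identity $L^{p,p}=L^p$ follows at once from Fubini and the layer-cake formula: the change of variable $t=|\{|f|>s\}|$ gives
\[
\int_0^\infty f^*(t)^p\,dt=p\int_0^\infty s^{p-1}\,|\{|f|>s\}|\,ds=\int|f(x)|^p\,dx.
\]
For the embedding $L^{p,q_1}\hookrightarrow L^{p,q_2}$ with $q_1\le q_2<\infty$, the monotonicity of $f^*$ gives the pointwise bound $t^{1/p}f^*(t)\lesssim\|f\|_{L^{p,q_1}}$; combining this with the $L^{q_1}(dt/t)$ control and H\"older's inequality of exponent $q_2/q_1$ produces the $L^{q_2}(dt/t)$ control. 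The case $q_2=\infty$ is immediate from the same pointwise bound.

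For the H\"older-type product inequality (first bullet), the endpoint case $q_j=p_j$ is nothing but classical H\"older on Lebesgue spaces, $\|fg\|_{L^p}\le\|f\|_{L^{p_1}}\|g\|_{L^{p_2}}$. Applying the Lions--Peetre bilinear real-interpolation theorem to the bilinear map $(f,g)\mapsto fg$ between two such H\"older triples chosen symmetrically around $(p,p_1,p_2)$ yields a bounded map
\[
(L^{a_0},L^{a_1})_{\theta,q_1}\times(L^{b_0},L^{b_1})_{\theta,q_2}\longrightarrow(L^{c_0},L^{c_1})_{\theta,q}
\]
provided $1/q=1/q_1+1/q_2$, which after identification is the desired Lorentz estimate. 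The convolution statement (second bullet) is proved in exactly the same way, replacing H\"older by Young's inequality $\|f*g\|_{L^p}\le\|f\|_{L^{p_1}}\|g\|_{L^{p_2}}$ with $1/p+1=1/p_1+1/p_2$ as the endpoint input; the case $p=\infty$ uses $\|f*g\|_{L^\infty}\le\|f\|_{L^{p_1}}\|g\|_{L^{p_1'}}$ as a second endpoint before interpolation in the Lorentz exponent $q$.

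The main obstacle, and where the technical care sits, is the accurate bookkeeping of the second Lorentz index $q$ under bilinear interpolation: while the first indices $p,p_1,p_2$ combine by H\"older or Young, the second indices must obey the Lions--Peetre convolution rule $1/q=1/q_1+1/q_2$, which encodes an auxiliary Young inequality on the multiplicative measure space $((0,\infty),dt/t)$. An alternative route, which is the one followed in O'Neil's original paper \cite{ON}, avoids bilinear interpolation altogether and works directly with the maximal rearrangement $f^{**}(t):=\tfrac{1}{t}\int_0^t f^*(s)\,ds$ via the pointwise estimates $(fg)^{**}(t)\le f^{**}(t)g^{**}(t)$ and $(f*g)^{**}(t)\le t\,f^{**}(t)g^{**}(t)+\int_t^\infty f^*(s)g^*(s)\,ds$, followed by Hardy-type integral inequalities on $dt/t$; both strategies yield the same exponents and constants.
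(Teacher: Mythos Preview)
Your argument is correct. Note, however, that the paper does not actually prove this proposition: it is stated as a collection of known facts with the parenthetical remark ``pour plus de d\'etails voir \cite{ON}'' and is used as a black box throughout. So there is no ``paper's own proof'' to compare against; you have supplied what the authors chose to outsource.

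That said, your two routes are both standard and sound. The bilinear Lions--Peetre argument is clean once one remembers that the same interpolation parameter~$\theta$ must be used in all three couples and that the constraint on the second indices is $1/q\le 1/q_1+1/q_2$ (which in particular covers the equality you wrote). The alternative you sketch at the end---O'Neil's pointwise inequalities for $f^{**}$ followed by Hardy's inequality on $((0,\infty),dt/t)$---is exactly the content of the reference \cite{ON} the paper cites, so in that sense your proposal is fully aligned with what the authors had in mind.
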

Dans le rep\`ere cylindrique $\omega=\nabla\times u$ admet une seule composante port\'ee par $e_{\theta}$ et dans le rep\`ere cart\'esienne deux composantes:
$$
\omega=(\omega^1,\omega^2,0)
$$
avec $\omega^1=\partial_y u^3-\partial_z u^2$ et
$\omega^2=\partial_z u^1-\partial_x u^3,$ $u^j$ pour $1\leq j\leq 3$ 
les composantes de $u$ dans la base cart\'esienne et $(x,y,z)$ les variables dans cette base.
Le fait que $u^\theta=0,$ alors dans le rep\`ere cylindrique, on a:
$$
\begin{aligned}
&u\cdot\nabla=u^r\partial_r+u^z\partial_z,
\\&
{\mathop{\rm div}}\,u=\partial_r u^r+{u^r\over r}+\partial_z u^z
\\&
\hspace{-3cm}\mbox{et}\hspace{2,5cm} u^r=\omega^\theta=0
\hspace{0,5cm}\mbox{sur la droite}\hspace{0,5cm} r=0.
\end{aligned}
$$
Le dernier point on peut le d\'eduire du fait que
$
u^\theta=0:
$
en effet, comme
$$
u^\theta=u\cdot e_{\theta}
$$
ainsi
\begin{equation}\label{u^r}
-yu^1+xu^2=0.
\end{equation}
Et par suite $u^1=0$ (resp. $u^2=0$) sur le plan $x=0$
(resp. $y=0$). Pour $\omega^\theta,$ on utilise le fait que $\omega$ est port\'ee par $e_{\theta},$ ce qui implique
$$
x\omega^1+y\omega^2=0,
$$
et par suite $\omega^1$ (resp. $\omega^2$) est nulle sur le plan $x=0$ (resp. $y=0$).
D'o\`u le r\'esultat.
Rappelons que si $u$ est solution de $(NS_v),$ alors $\omega$ v\'erifie l'\'equation suivante
$$
\partial_t \omega +(u^r\partial_r+u^z\partial_z)\omega - \frac{u^r}{r}\omega
- \partial^2_z\omega=0,
$$
mais comme $u^\theta=0,$ alors
\begin{equation}\label{tourbillon}
\partial_t \omega +(u\cdot\nabla)\omega-\frac{u^r}{r}\omega
- \partial^2_z\omega=0.
\end{equation}
Autrement dit, dans le cas axisym\'etrique, $(NS_v)$ se ram\`ene
\`a un probl\`eme d'\'evolution bidimensionnel.
Rappelons qu'en dimension 2, $\omega=\partial_xu^2-\partial_yu^1,$
v\'erifie l'\'equation de transport-diffusion suivante~:
$$
\partial_t\omega+(u\cdot\nabla)\omega-\partial^2_z\omega=0.
$$
En dimension 3 dans le cas axisym\'etrique $\frac{\omega}{r}$
joue un r\^ole similaire puisque
\begin{equation}\label{transport}
\partial_t\frac{\omega}{r}+(u\cdot\nabla)\frac{\omega}{r}
-\partial_z^2\frac{\omega}{r}=0.
\end{equation}
\section{D\'emonstration du th\'eor\`eme \ref{existence_globale} }
\subsection{Estimations a priori}
D'apr\`es l'\'equation (\ref{transport}) et la loi de Biot-Savart, on peut contr\^oler des quantit\'es
tr\`es importantes, qui nous permet de d\'emontrer l'existence globale. Plus exactement, on a la proposition suivante.

\begin{proposition}\label{Biot}
Soient $(p,q,\lambda)\in[1,\infty]^3,$ alors on a les in\'egalit\'es suivantes~:
\vspace*{0,2cm}

\begin{itemize}
\item Si ${3\over2}\leq p<\infty$ tel que
${1\over q}={1\over3}+{1\over p},$ alors
$$
\begin{aligned}
&\|u\|_{L^{p,\lambda}}
\lesssim
\|\omega\|_{L^{q,\lambda}},
\qquad
\|{u^r\over r}\|_{L^{p,\lambda}}
\lesssim
\|{\omega\over r}\|_{L^{q,\lambda}},
\qquad
\|\partial_zu^r\|_{L^{p,\lambda}}
\lesssim
\|\partial_z\omega\|_{L^{q,\lambda}},
\\
\\&
\|\partial_zu^z\|_{L^{p,\lambda}}
\lesssim
\|\partial_z\omega\|_{L^{q,\lambda}}
\hspace{0,3cm}
\mbox{et}\hspace{0,3cm}
\|\partial_zu^z\|_{L^{p,\lambda}}+\|\partial_ru^z\|_{L^{p,\lambda}}
\lesssim
\|\partial_r\omega\|_{L^{q,\lambda}}
+\|{\omega\over r}\|_{L^{q,\lambda}}.
\end{aligned}
$$
\vspace{0,1cm}
\item Si $3\leq p<\infty$ tel que
${1\over q}={2\over3}+{1\over p},$ alors
$$
\begin{aligned}
&\|u^r\|_{L^{p,\lambda}}
\lesssim
\|\partial_z\omega\|_{L^{q,\lambda}},
\hspace{1cm}
\|{u^r\over r}\|_{L^{p,\lambda}}
\lesssim
\|\partial_z{\omega\over r}\|_{L^{q,\lambda}}
\\
\\&
\|u^z\|_{L^{p,\lambda}}
\lesssim
\|\partial_r\omega\|_{L^{q,\lambda}}
+\|{\omega\over r}\|_{L^{q,\lambda}},
\hspace{0,5cm}
\|\partial_zu^z\|_{L^{p,\lambda}}
\lesssim
\|\partial_z\partial_r\omega\|_{L^{q,\lambda}}
+\|\partial_z{\omega\over r}\|_{L^{q,\lambda}}
\end{aligned}
$$
et
$$
\|\partial_ru^r\|_{L^{p,\lambda}}
\lesssim
\|\partial_z\partial_r\omega\|_{L^{q,\lambda}}
+\|\partial_z{\omega\over r}\|_{L^{q,\lambda}}.
$$

\vspace{0,5cm}

\item
Dans le cas limite, c'est-\`a-dire, $p=\infty$
$$
\begin{aligned}
&\|u\|_{L^\infty}
\lesssim
\|\omega\|_{L^{3,1}},
\qquad
\|u^r\|_{L^{\infty}}
\lesssim
\|\partial_z\omega\|_{L^{{3\over2},1}},
\qquad
\|{u^r\over r}\|_{L^{\infty}}
\lesssim
\|\partial_z{\omega\over r}\|_{L^{{3\over2},1}}
\\
\\&
\|u^z\|_{L^{\infty}}
\lesssim
\|\partial_r\omega\|_{L^{{3\over2},1}}
+\|{\omega\over r}\|_{L^{{3\over2},1}},
\hspace{0,5cm}
\|\partial_zu^z\|_{L^\infty}
\lesssim
\|\partial_z\partial_r\omega\|_{L^{{3\over2},1}}
+\|\partial_z{\omega\over r}\|_{L^{{3\over2},1}}
\end{aligned}
$$
et
$$
\|\partial_ru^r\|_{L^\infty}
\lesssim
\|\partial_z\partial_r\omega\|_{L^{{3\over2},1}}
+\|\partial_z{\omega\over r}\|_{L^{{3\over2},1}}.
$$
\end{itemize}
\end{proposition}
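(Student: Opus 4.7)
Toutes les estimations d\'ecoulent de deux ingr\'edients: la loi de Biot-Savart et l'in\'egalit\'e de convolution d'O'Neil dans les espaces de Lorentz (Proposition \ref{Neil}). Le noyau de Biot-Savart $K(X)=X/(4\pi|X|^3)$ v\'erifie $|K(X)|\lesssim|X|^{-2}$, donc appartient \`a $L^{3/2,\infty}(\R^3)$, tandis que la solution fondamentale du laplacien $G(X)=1/(4\pi|X|)$ appartient \`a $L^{3,\infty}(\R^3)$. Ces deux noyaux \'etant invariants par translation, on a $\partial_j(K*\omega)=K*\partial_j\omega$ et de m\^eme pour $G$, ce qui permettra de faire porter les d\'eriv\'ees sur $\omega$ plut\^ot que sur le noyau.

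\textbf{Preuve.} La premi\`ere famille d'in\'egalit\'es ($1/q=1/3+1/p$) s'obtient \`a partir de la majoration ponctuelle $|u(X)|\lesssim(1/|\cdot|^2)*|\omega|(X)$ et de l'in\'egalit\'e d'O'Neil. Les bornes sur $\partial_z u^r$ et $\partial_z u^z$ s'en d\'eduisent en commutant $\partial_z$ avec la convolution, soit $\partial_z u=K*\partial_z\omega$. Pour $u^r/r$, on utilise la structure axisym\'etrique: en d\'eveloppant $ru^r=xu^1+yu^2$ par Biot-Savart, avec $\omega^1=-(y'/r')\omega^\theta$ et $\omega^2=(x'/r')\omega^\theta$, on obtient, apr\`es int\'egration par rapport \`a la variable angulaire, un noyau d'ordre $|X|^{-2}$ agissant sur $\omega/r'$, d'o\`u $|u^r/r|\lesssim(1/|\cdot|^2)*|\omega/r|$. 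L'estimation faisant intervenir $\partial_r\omega+\omega/r$ au second membre repose sur la repr\'esentation $u=G*(\nabla\times\omega)$ combin\'ee \`a la formule cylindrique $\nabla\times(\omega^\theta e_\theta)=-\partial_z\omega^\theta\, e_r+\frac{1}{r}\partial_r(r\omega^\theta)\,e_z$, suivie d'une application de $\partial_z$ qui transforme le noyau $G$ en un noyau d'ordre $|X|^{-2}$. La seconde famille ($1/q=2/3+1/p$) part directement de $u=G*(\nabla\times\omega)$ avec $G\in L^{3,\infty}$: la formule cylindrique ci-dessus fait appara\^itre $\partial_z\omega$ dans la composante radiale et $\partial_r\omega+\omega/r$ dans la composante verticale, ce qui produit exactement les seconds membres annonc\'es. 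Les d\'eriv\'ees suppl\'ementaires $\partial_z u^z$ et $\partial_r u^r$ se traitent en les faisant commuter avec la convolution, et le cas limite $p=\infty$ correspond \`a la version limite d'O'Neil rappel\'ee dans la Proposition \ref{Neil}, avec par exemple $1/\infty+1/1=1$ et $1/3+2/3=1$ pour $G\in L^{3,\infty}$ convol\'e avec un terme dans $L^{3/2,1}$.

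\textbf{Principale difficult\'e.} Le point le plus d\'elicat sera de justifier rigoureusement les majorations du type $|u^r/r|\lesssim(1/|\cdot|^{\alpha})*|\omega/r|$ ou $|u^r/r|\lesssim(1/|\cdot|^{\alpha})*|\partial_z\omega/r|$: il s'agit d'extraire un facteur $r$ du noyau cylindrique obtenu apr\`es int\'egration azimutale dans la loi de Biot-Savart, ce qui repose sur une estimation g\'eom\'etrique fine sur la diff\'erence des angles entre $X$ et $Y$. Une fois ces majorations ponctuelles \'etablies, toutes les in\'egalit\'es annonc\'ees suivent par application syst\'ematique de la Proposition \ref{Neil}.
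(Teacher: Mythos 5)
Your overall architecture matches the paper's: represent $u$ via Biot--Savart (equivalently $u=G\star(\nabla\times\omega)$ with $G=1/(4\pi|\cdot|)$), integrate by parts to place the derivatives on $\omega$ (producing $\partial_z\omega$ for $u^r$ and $\partial_r\omega+\omega/r$ for $u^z$), then conclude par l'in\'egalit\'e de convolution d'O'Neil avec $|X|^{-1}\in L^{3,\infty}$ et $|X|^{-2}\in L^{3/2,\infty}$. Mais votre preuve a une lacune r\'eelle exactement l\`a o\`u vous signalez la \og principale difficult\'e\fg~: les majorations ponctuelles $|u^r|\lesssim r\,\big(|\cdot|^{-2}\star|\omega/r|\big)$ et $|u^r|\lesssim r\,\big(|\cdot|^{-1}\star|\partial_z(\omega/r)|\big)$ sont affirm\'ees et non d\'emontr\'ees, alors qu'elles constituent le c\oe ur technique de la proposition (ce sont elles qui donnent $\|u^r/r\|_{L^\infty}\lesssim\|\partial_z(\omega/r)\|_{L^{3/2,1}}$, estimation sur laquelle repose toute l'existence globale). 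Le papier les obtient par le proc\'ed\'e de Shirota--Yanagisawa~: partant de $u^r(r,z)=\frac{1}{4\pi}\int|X-X'|^{-1}\cos\theta'\,\partial_{z'}\omega^\theta\,r'dr'd\theta'dz'$, on replie l'int\'egrale angulaire par le changement $\theta'\mapsto\theta'+\pi$, de sorte que le noyau devient une \emph{diff\'erence} de deux noyaux; sur $\{|X-X'|\le r\}$ le facteur $r$ provient de $r'\le 2r$, et sur $\{|X-X'|\ge r\}$ la diff\'erence des noyaux est major\'ee par $2r/|X-X'|^2$, puis on \'ecrit $r'=(r'-r)+r$ avec $|r'-r|\le|X-X'|$. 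Sans cet argument de compensation, aucune \og int\'egration azimutale \fg\ ne fait appara\^itre un noyau d'ordre $|X|^{-2}$ agissant sur $\omega/r'$~: le calcul na\"if ne donne que $|u^r|\lesssim|\cdot|^{-1}\star|\partial_z\omega|$, sans gain du facteur $r$.

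Deux points secondaires. D'abord, $\partial_r$ n'est pas une d\'eriv\'ee \`a coefficients constants et ne commute donc pas avec la convolution~; \og faire commuter $\partial_ru^r$ avec la convolution \fg\ n'a pas de sens direct. Le papier obtient $\partial_ru^r$ gratuitement par l'incompressibilit\'e, $\partial_ru^r=-u^r/r-\partial_zu^z$, et traite $\partial_ru^z$ en d\'erivant le noyau et en utilisant $|r-r'\cos\theta'|\le|X-X'|$. Ensuite, pour l'estimation $\|\partial_zu^z\|+\|\partial_ru^z\|\lesssim\|\partial_r\omega\|+\|\omega/r\|$ il faut l'identit\'e cylindrique $\partial_{y'}\omega^1-\partial_{x'}\omega^2=-(\partial_{r'}\omega^\theta+\omega^\theta/r')$, que votre formule du rotationnel en coordonn\'ees cylindriques encode bien~; cette partie-l\`a est correcte une fois r\'edig\'ee.
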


\begin{proof}
D'apr\`es la loi de Biot-Savart, on a
\begin{equation}\label{vitesse}
u(X)={1\over{4\pi}}\int_{\R^3}{{X-X'}\over{|X-X'|^3}}\times\,\omega(X')dX',
\end{equation}
avec $X=(x,y,z)$ et $X'=(x',y',z'),$ et par suite
$$
|u|
\lesssim
{1\over |\cdot|^2}\star|\omega|,
$$
or par d\'efinition de l'espace de Lorentz 
(d\'efinition \ref{espace_lorentz}), on a
$$
{1\over |X|^2}\in L^{{3\over2},\infty}(\R^3)
$$
ainsi gr\^ace \`a la Proposition \ref{Neil}, on en d\'eduit
$$
\|u\|_{L^{p,\lambda}}
\lesssim
\|\omega\|_{L^{{3p\over 3+p},\lambda}}
\hspace{1cm}
\mbox{pour ${3\over2}\leq p<\infty$}
\hspace{1cm}
\mbox{et}
\hspace{1cm}
\|u\|_{L^\infty}
\lesssim
\|\omega\|_{L^{3,1}}.
$$
D'apr\`es l'\'egalit\'e (\ref{vitesse}), on a
$$
u^1(x)=-{1\over{4\pi}}\int_{\R^3}{{z-z'}\over{|X-X'|^3}}\,\omega^2(X')dX'
$$
et
$$
u^2={1\over{4\pi}}\int_{\R^3}{{z-z'}\over{|X-X'|^3}}\,\omega^1(X')dX'
$$
avec
$\omega^1(X')=-\sin\theta'\,\omega^\theta(X')$ et
$\omega^2(X')=\cos\theta'\,\omega^\theta(X').$
Ainsi
$$
\begin{aligned}
u^r(X)
&=\cos\theta\, u^1(X)+\sin\theta\, u^2(X)
\\&
={1\over{4\pi}}\int_{\R^3}{{z-z'}\over{|X-X'|^3}}
\big\{-\cos\theta\cos\theta'-\sin\theta\sin\theta'\big\}
\omega^\theta(X') dX'
\end{aligned}
$$
o\`u l'on d\'esigne par $(r,\theta,z)$ les variables dans le rep\`ere cylindrique, rappelons que dans ce rep\`ere 
$X=(r\cos\theta,r\sin\theta,z)$ et $X'=(r'\cos\theta',r'\sin\theta',z').$
Et par suite
$$
\begin{aligned}
u^r(X)&=-{1\over{4\pi}}\int_{\R^3}{{z-z'}\over{|X-X'|^3}}
\big\{\cos\theta\sin\theta'+\sin\theta\cos\theta'\big\}\omega^\theta(X') dX'
\\&
=-{1\over{4\pi}}\int_{\R^3}{{z-z'}\over{|X-X'|^3}}
\cos(\theta-\theta')\omega^\theta(r',z')
r'dr'd\theta'dz',
\end{aligned}
$$
or
$$
{{z-z'}\over{|X-X'|^3}}=\partial_{z'}{1\over|X-X'|},
$$
ainsi par int\'egration par parties, on trouve
$$
u^r(X)
={1\over{4\pi}}\int_{\R^3}{1\over{|X-X'|}}
\cos(\theta-\theta')\partial_{z'}\omega^\theta(r',z')
r'dr'd\theta'dz'.
$$
Mais comme $u^r$ ne d\'epend pas de $\theta$ (X=(r,0,z)), alors
\begin{equation}\label{forme1}
u^r(t,r,z)
={1\over{4\pi}}\int_{\R^3}{1\over{|X-X'|}}
\cos\theta'\partial_{z'}\omega^\theta(t,r',z')
r'dr'd\theta'dz',
\end{equation}
ce qui implique que
$$
|u^r|
\lesssim
{1\over |\cdot|}\star|\partial_{z'}\omega|.
$$
Or par d\'efinition de l'espace de Lorentz, on a
$$
{1\over |X|}\in L^{3,\infty}(\R^3)
$$
ainsi gr\^ace \`a la Proposition \ref{Neil}, on obtient l'in\'egalit\'e 
souhait\'ee.
Pour la deuxi\`eme in\'egalit\'e de la proposition, gr\^ace a l'\'egalit\'e
(\ref{forme1}), on a
$$
|\partial_zu^r|
\lesssim
{1\over |\cdot|^2}\star|\partial_{z'}\omega|,
$$
en cons\'equence la Proposition \ref{Neil}, donne l'in\'egalit\'e d\'esir\'ee.
Pour ${u^r\over r},$ on utilise l'identit\'e (\ref{forme1}) et
on suit les m\^emes calculs de \cite{SY}, on trouve
$$
\begin{aligned}
u^r(t,r,z)
&={1\over{4\pi}}\int_{\R_+\times[0,2\pi]\times\R}
{\cos\theta'\partial_{z'}\omega^\theta(t,r',z')\over
{(r^2+r'^2-2rr'\cos\theta'+(z-z')^2)^{1\over 2}}} \,
r'dr'd\theta'dz'
\\&
\\&
=
{1\over{4\pi}}\int_{\R_+\times[-{\pi\over2},{\pi\over2}]\times\R}
{\cos\theta'\partial_{z'}\omega^\theta(t,r',z')\over
{(r^2+r'^2-2rr'\cos\theta'+(z-z')^2)^{1\over 2}}} \,
r'dr'd\theta'dz'
\\&
\\&
+{1\over{4\pi}}\int_{\R_+\times[{\pi\over2},{3\pi\over2}]\times\R}
{\cos\theta'\partial_{z'}\omega^\theta(t,r',z')\over
{(r^2+r'^2-2rr'\cos\theta'+(z-z')^2)^{1\over 2}}} \,
r'dr'd\theta'dz'
\end{aligned}
$$
pour la deuxi\`eme partie, on effectue le changement de variable suivant
$\theta'\to \theta'+\pi,$ on aura
\begin{equation}\label{forme2}
\begin{aligned}
u^r(t,r,z)
&=
{1\over{4\pi}}\int_{\R_+}\int_{-{\pi\over2}}^{{\pi\over2}}\int_{\R}
{\cos\theta'\partial_{z'}\omega^\theta(t,r',z')\over
{(r^2+r'^2-2rr'\cos\theta'+(z-z')^2)^{1\over 2}}} \,
r'dr'd\theta'dz'
\\&
\\&
-
{1\over{4\pi}}\int_{\R_+}\int_{-{\pi\over2}}^{{\pi\over2}}\int_{\R}
{\cos\theta'\partial_{z'}\omega^\theta(t,r',z')\over
{(r^2+r'^2+2rr'\cos\theta'+(z-z')^2)^{1\over 2}}} \,
r'dr'd\theta'dz'.
\end{aligned}
\end{equation}
Si $|X-X'|\leq r,$ on utilise l'\'egalit\'e (\ref{forme1}) et le fait que
$r'\leq 2r,$ on trouve
$$
\Big|\int_{|X-X'|\leq r}{\cos\theta'\partial_{z'}\omega^\theta(t,r',z')
\over |X-X'|}
r'dr'd\theta'dz'\Big|
\lesssim
r\int_{\R^3}{1\over{|X-X'|}}
\big|\partial_{z'}{\omega(t,X')\over r'}\big|dX'.
$$
Si $|X-X'|\geq r,$ on utilise l'\'egalit\'e (\ref{forme2}) et le fait que
$$
\begin{aligned}
\Big|\Big(r^2+r'^2+2rr'\cos\theta'+(z-z')^2\Big)^{-{1\over 2}}
&-
\Big(r^2+r'^2-2rr'\cos\theta'+(z-z')^2\Big)^{-{1\over 2}}\Big|
\\&
\leq
{2r\over|X-X'|^2},
\end{aligned}
$$
car $-{\pi\over2}\leq\theta'\leq{\pi\over2}.$ Ainsi dans cette r\'egion, on trouve
$$
\begin{aligned}
\Big|\int_{|X-X'|\geq r}{\cos\theta'\partial_{z'}\omega^\theta(t,r',z')
\over |X-X'|}
r'dr'd\theta'dz'\Big|
&\lesssim
r\int_{|X-X'|\geq r}{1\over{|X-X'|^2}}
|\partial_{z'}\omega(t,X')|dX'
\\&
\vspace*{2cm}
\lesssim
r\int_{|X-X'|\geq r}{r'\over{|X-X'|^2}}
\big|\partial_{z'}{\omega(t,X')\over r'}\big|dX',
\end{aligned}
$$
apr\`es, on utilise le fait que $r'=r'-r+r$ et $|r'-r|\leq|X-X'|,$ on obtient
$$
\Big|\int_{|X-X'|\geq r}{\cos\theta'\partial_{z'}\omega^\theta(t,r',z')
\over |X-X'|}r'dr'd\theta'dz'\Big|
\lesssim
r\int_{\R^3}{\over{|X-X'|}}
\big|\partial_{z'}{\omega(t,X')\over r'}\big|.
$$
Donc
$$
|u^r(t,X)|
\lesssim
r\int_{\R^3}{1\over{|X-X'|}}
\big|\partial_{z'}{\omega(t,X')\over r'}\big|dX',
$$
aussi on a
$$
|u^r(t,X)|
\lesssim
r\int_{\R^3}{1\over{|X-X'|^2}}
\big|{\omega(t,X')\over r'}\big|dX'.
$$
Pour conclure il suffit d'utiliser les lois de convolutions. 
Concernant $u^z$ d'apr\`es la loi de Biot-Savart, on a
\begin{equation}\label{u_z}
u^z(X)={1\over4\pi}\int_{\R^3}{\frac{(x-x')\omega^2(X')-(y-y')\omega^1(X')}
{|X-X'|^3}}\,dX'.
\end{equation}
Or
$$
{\frac{x-x'}{|X-X'|^3}}=\partial_{x'}{\frac{1}{|X-X'|}}
\quad\mbox{et}\quad -{\frac{y-y'}{|X-X'|^3}}=-\partial_{y'}{\frac{1}{|X-X'|}},
$$
alors par int\'egration par parties, on obtient
$$
u^z(X)={1\over4\pi}\int_{\R^3}{\frac{\partial_{y'}\omega^1-\partial_{x'}\omega^2}
{|X-X'|}}\,dX'.
$$
Mais en coordonn\'ees cylindriques, on a
$$
\begin{aligned}
&\partial_{x'}=\cos\theta'\partial_{r'}-{1\over r'}\sin\theta'\partial_{\theta'},
\quad
\partial_{y'}=\sin\theta'\partial_{r'}+{1\over r'}\cos\theta'\partial_{\theta'},
\\&
\omega^1=-\sin\theta'\omega^\theta
\quad\mbox{et}\quad
\omega^2=\cos\theta'\omega^\theta,
\end{aligned}
$$
et par suite
$$
\begin{aligned}
\partial_{y'}\omega^1-\partial_{x'}\omega^2
&=
-\sin^2\theta'\partial_{r'}\omega^\theta-{1\over r'}\cos^2\theta'\omega^\theta
-\big(\cos^2\theta'\partial_{r'}\omega^\theta+{1\over r'}\sin^2\theta'\omega^\theta)
\\&
=-\partial_{r'}\omega^\theta-{\omega^\theta\over r'},
\end{aligned}
$$
ainsi
\begin{equation}\label{uz}
u^z(X)=-{1\over4\pi}\int_{\R^3}{1\over |X-X'|}
\big(\partial_{r'}\omega^\theta+{\omega^\theta\over r'}\big)dX'.
\end{equation}
Donc
$$
|u^z|
\lesssim
{1\over |\cdot|}\star \big(|\partial_{r'}\omega|+|{\omega\over r'}|\big),
$$
de m\^eme pour la d\'eriv\'e par rapport \`a $z,$ on suit les m\^emes calculs et gr\^ace aux \'egalit\'es (\ref{u_z}) et (\ref{uz}), on trouve
$$
|\partial_zu^z|
\lesssim
\begin{cases}
{1\over |\cdot|^2}\star|\partial_{z'}\omega|\\
{1\over |\cdot|^2}\star\big(|\partial_{r'}\omega|
+|{\omega\over r'}|\big) \\
{1\over |\cdot|}\star\big(|\partial_{z'}\partial_{r'}\omega|+|\partial_{z'}{\omega\over r'}|\big).
\end{cases}
$$
Donc d'apr\`es les lois de convolutions, on d\'eduit les in\'egalit\'es souhait\'ees. Concernant
$\partial_ru^z,$ d'apr\`es l'\'egalit\'e (\ref{uz}), on a
$$
\partial_ru^z(X)={1\over4\pi}\int_{\R^3}{r-r'\cos\theta'\over |X-X'|^3}
\big(\partial_{r'}\omega^\theta-{\omega^\theta\over r'}\big)dX',
$$
alors
$$
|\partial_ru^z|
\lesssim
{1\over |\cdot|^2}\star\big(|\partial_{r'}\omega|+|{\omega\over r'}|\big)
$$
car
$$
\big|{r-r'\cos\theta'\over |X-X'|}\big|\le1.
$$
Enfin pour $\partial_ru^r,$ il suffit d'utiliser le fait que
$$
{\mathop{\rm div}}\,u=\partial_ru^r+{u^r\over r}+\partial_zu^z=0.
$$
D'o\`u la proposition.
\end{proof}

D'apr\`es la Proposition \ref{Biot}, on a besoin de contr\^ol\'e
$\omega$ dans l'espace de Lorentz $L^{{3\over 2},1},$ qui est l'objet de la proposition suivante. Plus exactement on va donner une estimation de la solution de l'\'equation transport-diffusion.

\begin{proposition}\label{Estimation_omega}
Soient $1<p<2,$ $1\leq q\leq\infty,$ $\omega_0\in L^{p,q}$
et $u$ un champ de vecteurs axisym\'etrique r\'eguli\`ere tels que
\mbox{${u^r\over r}\in L^1_t(L^\infty)$} et
${\mathop{\rm div}}\,u=0.$ Soit
$\omega\in L^\infty_t(L^{p,q})$ et
$\partial_z\omega\in L^2_t(L^{p,q})$
une solution du syst\`eme suivant
$$
{\rm(TD_{mod})}\;\left\{
\begin{array}{rl}
&\hspace{-0,5cm}\partial_t \omega +(u\cdot\nabla)\omega
-\frac{u^r}{r}\omega- \partial^2_z\omega=0\\
&\hspace{-0,5cm}\omega_{|t=0}=\omega_0.
\end{array}
\right.
$$
Alors
$$
\|\omega(t)\|_{L^{p,q}}+\|\partial_z\omega\|_{L^2_t(L^{p,q})}
\lesssim
\|\omega_0\|_{L^{p,q}}
e^{\int_0^t\|{u^r\over r}\|_{L^\infty}}.
$$
\end{proposition}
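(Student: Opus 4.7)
The plan is to establish an $L^p$ energy estimate for Lebesgue exponents in $(1,2)$, then pass to Lorentz spaces by real interpolation. Fix $p\in(1,2)$ and multiply the equation by $|\omega|^{p-2}\omega$, integrating over $\R^3$. Because $u$ is divergence free, the transport term rewrites as $\tfrac1p\int u\cdot\nabla|\omega|^p=-\tfrac1p\int(\mathop{\rm div}u)|\omega|^p=0$. The stretching term is controlled by $\|u^r/r\|_{L^\infty}\|\omega\|_{L^p}^p$, and integrating by parts in $z$ the diffusion term produces the nonnegative dissipation $(p-1)\int|\omega|^{p-2}|\partial_z\omega|^2$. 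This yields
$$
\frac{1}{p}\frac{d}{dt}\|\omega\|_{L^p}^p+(p-1)\int|\omega|^{p-2}|\partial_z\omega|^2\,dx\le\|u^r/r\|_{L^\infty}\|\omega\|_{L^p}^p,
$$
and Gr\"onwall's lemma gives $\|\omega(t)\|_{L^p}\le\|\omega_0\|_{L^p}\exp\int_0^t\|u^r/r\|_{L^\infty}$.

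To turn the dissipation into an $L^2_tL^p$ bound on $\partial_z\omega$ rather than its natural quadratic-in-$\omega$ form, I would use the factorisation
$$
|\partial_z\omega|^p=\bigl(|\omega|^{p-2}|\partial_z\omega|^2\bigr)^{p/2}|\omega|^{p(2-p)/2}
$$
together with H\"older's inequality in space with conjugate exponents $2/p$ and $2/(2-p)$ (both valid because $p\in(1,2)$), which produces $\|\partial_z\omega\|_{L^p}^2\le\bigl(\int|\omega|^{p-2}|\partial_z\omega|^2\,dx\bigr)\|\omega\|_{L^p}^{2-p}$. Integrating this in time and combining with the two pieces of the energy inequality above yields $\|\partial_z\omega\|_{L^2_tL^p}\lesssim\|\omega_0\|_{L^p}\exp\int_0^t\|u^r/r\|_{L^\infty}$.

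Finally, to pass from Lebesgue to Lorentz I exploit the linearity (for fixed drift $u$) of the solution map $T_u:\omega_0\mapsto(\omega,\partial_z\omega)$. Choosing $1<p_0<p<p_1<2$ with $\tfrac1p=\tfrac{1-\theta}{p_0}+\tfrac{\theta}{p_1}$, the previous two steps give that $T_u$ sends $L^{p_i}$ continuously into $L^\infty_tL^{p_i}\cap L^2_tL^{p_i}$ with norm $\le Ce^{\int\|u^r/r\|_{L^\infty}}$. Real interpolation $(\cdot,\cdot)_{\theta,q}$ combined with the identification $(L^{p_0},L^{p_1})_{\theta,q}=L^{p,q}$ then lifts this to the stated bound. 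The main subtlety — and the step I would expect to require most care — is the identification of the image space: the $L^\infty_tL^{p,q}$ half is immediate because for each fixed $t$ the map $\omega_0\mapsto\omega(t)$ is a bounded operator between pure Lebesgue spaces, but for the $L^2_tL^{p,q}$ half one has to invoke a Lions--Peetre-type identity for the Bochner scale, or equivalently perform a $K$-functional argument by hand, splitting $\omega_0=f+g$ optimally between $L^{p_0}$ and $L^{p_1}$, solving the equation for each piece by linearity of $T_u$, and reassembling, so that the Lebesgue estimates of the first two steps propagate cleanly to the Lorentz norms.
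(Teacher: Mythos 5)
Your proposal is correct and follows essentially the same route as the paper: the same $L^p$ energy estimate with Gr\"onwall, the same factorisation $\|\partial_z\omega\|_{L^p}^2\le\bigl(\int|\omega|^{p-2}|\partial_z\omega|^2\bigr)\|\omega\|_{L^p}^{2-p}$ (which is exactly the paper's Lemme \ref{Lp}), and the same passage to Lorentz spaces by real interpolation of the linear solution maps $\omega_0\mapsto\omega$ and $\omega_0\mapsto\partial_z\omega$ for fixed drift $u$. The only difference is cosmetic: you flag the vector-valued interpolation subtlety for the $L^2_t L^{p,q}$ component explicitly, whereas the paper simply cites Bergh--L\"ofstr\"om.
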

\begin{proof}
Tout d'abord on va estimer $\omega$ dans les espaces de Lebesgue.
Soit $1<p<\infty,$ on multiplie l'\'equation v\'erifi\'ee par $\omega$ par
$\vert\omega\vert^{p-1}{\rm sign}\,\omega.$
On obtient apr\`es int\'egrations par parties combin\'ees avec le fait que
${\mathop{\rm div}\nolimits\,u}= 0$
$$
\frac{1}{p}\frac{d}{dt}\Vert\omega\Vert_{L^p}^p
+{4(p-1)\over p^2}
\Big\Vert\partial_z\vert\omega\vert^{\frac{p}{2}}\Big\Vert_{L^2}^2
=\int_{\R^3}\frac{u^r}{r}|\omega|^pdx,
$$
par suite l'in\'egalit\'e de H\"older plus l'int\'egration par rapport au temps, impliquent
$$
\Vert\omega(t)\Vert_{L^p}^p
+{4(p-1)\over p}\Big\Vert\partial_z\vert\omega\vert^{\frac{p}{2}}
\Big\Vert_{L^2_t(L^2)}^2
\leq
\|\omega_0\|_{L^p}^p+p\int_0^t\|\frac{u^r}{r}(\tau)\|_{L^\infty}
\|\omega(\tau)\|_{L^p}^pd\tau.
$$
Ainsi le lemme de Gronwall, implique que
\begin{equation}\label{Prod-Scal}
\Vert\omega(t)\Vert_{L^p}^p
+{4(p-1)\over p}\Big\Vert\partial_z\vert\omega\vert^{\frac{p}{2}}
\Big\Vert_{L^2_t(L^2)}^2
\leq
\|\omega_0\|_{L^p}^p\exp\Big(p\int_0^t\|\frac{u^r}{r}(\tau)\|_{L^\infty}d\tau\Big).
\end{equation}
Pour estimer $\partial_z\omega$ dans $L^p$ nous allons utiliser
le lemme suivant. Admettons-le pour le moment.
\begin{lemme}\label{Lp}
Soient $1\leq p\leq2$ et $f\in L^p(\R^N)$ tel que
$\partial_i\vert u\vert^{\frac{p}{2}}\in L^2(\R^N).$ Alors
$$
\Vert \partial_if\Vert_{L^p}
\lesssim
\Big\Vert\partial_i\vert f\vert^{\frac{p}{2}}\Big\Vert_{L^2}
\Vert f\Vert_{L^p}^{\frac{2-p}{2}}.
$$
\end{lemme}
Pour $p\leq2,$ on en d\'eduit gr\^ace au Lemme \ref{Lp} et l'in\'egalit\'e
(\ref{Prod-Scal}), que
$$
\begin{aligned}
\Vert \partial_z\omega\Vert_{L^2_t(L^p)}
&\lesssim
\Big(\int_0^t\Big\Vert\partial_z\vert \omega\vert^{\frac{p}{2}}\Big\Vert_{L^2}^2
\Vert \omega\Vert_{L^p}^{2-p}d\tau\Big)^{1\over2}
\\&
\lesssim
\Vert \omega\Vert_{L^\infty_t(L^p)}^{2-p\over2}
\Big\Vert\partial_z\vert\omega\vert^{\frac{p}{2}}\Big\Vert_{L^2_t(L^2)}
\\&
\lesssim
\|\omega_0\|_{L^p}\exp\Big(\int_0^t\|\frac{u^r}{r}(\tau)\|_{L^\infty}d\tau\Big).
\end{aligned}
$$
Donc
\begin{equation}\label{omega}
\Vert\omega(t)\Vert_{L^p}+\Vert \partial_z\omega\Vert_{L^2_t(L^p)}
\lesssim
\|\omega_0\|_{L^p}\exp\Big(\int_0^t\|\frac{u^r}{r}(\tau)\|_{L^\infty}d\tau\Big).
\end{equation}
On d\'esigne par ${\mathcal{T}}$ et ${\mathcal{S}}$ les op\'erateurs suivants:
$$
\begin{aligned}
{\mathcal{T}}:\hspace{1cm} &L^p\longrightarrow L^p
\hspace{2cm}
{\mathcal{S}}:\hspace{1cm} L^p\longrightarrow L^2_t(L^p)
\\&
\omega_0\longmapsto \omega
\hspace{3,75cm}
\omega_0\longmapsto \partial_z\omega,
\end{aligned}
$$
avec $\omega$ solution du syst\`eme ${\rm(TD_{mod})}.$
Par d\'efinition, on a ${\mathcal{T}}$ et ${\mathcal{S}}$ sont lin\'eaires, alors par d\'efinition de l'espace de Lorentz (interpolation r\'eelle) et 
\cite{BER}, on obtient
\begin{equation}\label{Lorentz_omega}
\|\omega(t)\|_{L^{p,q}}+\|\partial_z\omega(\tau)\|_{L^2_t(L^{p,q})}
\lesssim
\|\omega_0\|_{L^{p,q}}\exp\Big(\int_0^t\|\frac{u^r}{r}(\tau)\|_{L^\infty}d\tau\Big).
\end{equation}
D'o\`u la proposition.
\end{proof}
En suivant les m\^emes calculs, on d\'eduit le corollaire suivant.
\begin{corollaire}\label{estimation_omega/r}
Soient $1<p<2,$ $1\leq q\leq\infty,$ $r^{-1}\omega_0\in L^{p,q}$
et $u$ un champ de vecteurs axisym\'etrique r\'eguli\`ere tel que
${\mathop{\rm div}}\,u=0.$ Soit
$r^{-1}\omega\in L^\infty_t(L^{p,q})$ et
$r^{-1}\partial_z\omega\in L^2_t(L^{p,q})$
une solution du syst\`eme suivant
$$
\left\{
\begin{array}{rl}
&\hspace{-0,5cm}\partial_t {\omega\over r} +(u\cdot\nabla){\omega\over r}-
\partial^2_z{\omega\over r}=0\\
&\hspace{-0,5cm}{\omega\over r}_{|t=0}={\omega_0\over r}.
\end{array}
\right.
$$
Alors
$$
\Big\|{\omega\over r}(t)\Big\|_{L^{p,q}}
+\Big\|\partial_z{\omega\over r}\Big\|_{L^2_t(L^{p,q})}
\lesssim
\Big\|{\omega_0\over r}\Big\|_{L^{p,q}}.
$$
\end{corollaire}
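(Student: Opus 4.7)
La d\'emonstration suit exactement les m\^emes \'etapes que celle de la Proposition \ref{Estimation_omega}, avec une simplification cruciale~: l'\'equation de transport-diffusion v\'erifi\'ee par $\omega/r$ ne contient \emph{pas} le terme d'\'etirement $(u^r/r)\omega$ qui figurait dans ${\rm(TD_{mod})}.$ Tout d'abord, je multiplierais l'\'equation par $|\omega/r|^{p-1}\mathrm{sign}(\omega/r)$ et j'int\'egrerais sur $\R^3.$ En utilisant ${\mathop{\rm div}}\,u=0,$ le terme de transport dispara\^it apr\`es int\'egration par parties, et l'on obtient l'identit\'e
$$
\frac{1}{p}\frac{d}{dt}\Big\|\frac{\omega}{r}\Big\|_{L^p}^p
+\frac{4(p-1)}{p^2}\Big\|\partial_z\Big|\frac{\omega}{r}\Big|^{p/2}\Big\|_{L^2}^2=0.
$$
L'int\'egration en temps donne alors directement, sans aucun recours au lemme de Gronwall,
$$
\Big\|\frac{\omega}{r}(t)\Big\|_{L^p}^p
+\frac{4(p-1)}{p}\Big\|\partial_z\Big|\frac{\omega}{r}\Big|^{p/2}\Big\|_{L^2_t(L^2)}^2
\leq\Big\|\frac{\omega_0}{r}\Big\|_{L^p}^p.
$$

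Ensuite, pour contr\^oler $\partial_z(\omega/r)$ dans $L^2_t(L^p),$ j'appliquerais le Lemme \ref{Lp} comme dans la preuve pr\'ec\'edente, ce qui fournit
$$
\Big\|\partial_z\frac{\omega}{r}\Big\|_{L^2_t(L^p)}
\lesssim
\Big\|\frac{\omega}{r}\Big\|_{L^\infty_t(L^p)}^{(2-p)/2}
\Big\|\partial_z\Big|\frac{\omega}{r}\Big|^{p/2}\Big\|_{L^2_t(L^2)}
\lesssim
\Big\|\frac{\omega_0}{r}\Big\|_{L^p}.
$$
Enfin, pour passer de $L^p$ \`a l'espace de Lorentz $L^{p,q},$ il suffirait d'invoquer l'interpolation r\'eelle exactement comme \`a la fin de la preuve de la Proposition \ref{Estimation_omega}~: les op\'erateurs lin\'eaires ${\omega_0}/{r}\mapsto{\omega}/{r}$ et ${\omega_0}/{r}\mapsto\partial_z({\omega}/{r})$ sont born\'es respectivement de $L^{p_0}$ dans $L^\infty_t(L^{p_0})$ et de $L^{p_0}$ dans $L^2_t(L^{p_0})$ pour tout $1<p_0<2,$ et la caract\'erisation $L^{p,q}=(L^{p_0},L^{p_1})_{\theta,q}$ conclut.

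La preuve est donc essentiellement routini\`ere; il n'y a pas d'obstacle s\'erieux \`a surmonter, l'unique subtilit\'e \'etant l'absorption du terme de vortex stretching dans l'\'equation v\'erifi\'ee par $\omega/r.$ C'est pr\'ecis\'ement cette disparition qui fait dispara\^itre le facteur exponentiel $\exp\big(\int_0^t\|u^r/r\|_{L^\infty}\big)$ pr\'esent dans la Proposition \ref{Estimation_omega} et fournit un contr\^ole \emph{a priori} de $\omega/r$ ne d\'ependant que de sa donn\'ee initiale, ce qui sera l'ingr\'edient d\'ecisif pour fermer l'estimation globale sur $\omega$ dans la suite.
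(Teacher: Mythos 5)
Votre démonstration est correcte et suit exactement la voie du papier, qui se contente d'indiquer « en suivant les mêmes calculs » que la Proposition \ref{Estimation_omega} : estimation d'énergie dans $L^p$ (le terme d'étirement étant absent, le facteur exponentiel disparaît), Lemme \ref{Lp} pour contrôler $\partial_z(\omega/r)$ dans $L^2_t(L^p)$, puis interpolation réelle pour passer aux espaces de Lorentz. Rien à redire.
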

\begin{remarque}\label{p-qq}
D'apr\`es l'in\'egalit\'e (\ref{Prod-Scal}) et le fait que
$$
\|{\omega\over r}(t)\|_{L^p}
\le
\|{\omega_0\over r}\|_{L^p},
$$
on en d\'eduit gr\^ace \`a \cite{BER}, que
$\forall (p,q)\in]1,\infty[\times[1,\infty]$
$$
\|\omega(t)\|_{L^{p,q}}
\le
\|\omega_0\|_{L^{p,q}}e^{\int_0^t\|\frac{u^r}{r}(\tau)\|_{L^\infty}d\tau}
$$
et
$$
\|{\omega\over r}(t)\|_{L^{p,q}}
\le
\|{\omega_0\over r}\|_{L^{p,q}}.
$$
\end{remarque}
D'apr\`es la Proposition \ref{Biot}, le Corollaire \ref{estimation_omega/r}
et l'in\'egalit\'e de H\"older, on a
\begin{equation}\label{ur/r}
\begin{aligned}
\Big\|{u^r\over r}\Big\|_{L^1_t(L^\infty)}
\lesssim
\Big\|\partial_z{\omega\over r}\Big\|_{L^1_t(L^{{3\over2},1})}
&\lesssim
t^{1\over 2}\Big\|\partial_z{\omega\over r}\Big\|_{L^2_t(L^{{3\over2},1})}
\\&
\lesssim
t^{1\over 2}\Big\|{\omega_0\over r}\Big\|_{L^{{3\over 2},1}}.
\end{aligned}
\end{equation}
Et par suite pour tout $p\in]1,2[$ et $q\in[1,\infty]$ les in\'egalit\'es
(\ref{Lorentz_omega}) et (\ref{ur/r}), impliquent
\begin{equation}\label{effet_regularisant}
\|\omega(t)\|_{L^{p,q}}
+\|\partial_z\omega\|_{L^2_t(L^{p,q})}
\leq
C\|\omega_0\|_{L^{p,q}}
e^{Ct^{1\over 2}\|{\omega_0\over r}
\|_{L^{{3\over 2},1}}}.
\end{equation}
Ainsi la Proposition \ref{Biot}, Remarque \ref{p-qq} et l'in\'egalit\'e 
(\ref{ur/r}), impliquent que pour tout 
$(p,q)\in({3\over2},\infty)\times[1,\infty],$
$$
\|u(t)\|_{L^{p,q}}
\le
C\|\omega_0\|_{L^{{3p\over3+p},q}}
e^{Ct^{1\over 2}\|{\omega_0\over r}\|_{L^{{3\over 2},1}}}.
$$
Donc, si $\omega\in L^{{3\over2},1},$ alors l'in\'egalit\'e pr\'ec\'edente implique que $u\in L^{3,1},$ qui est inclus dans l'espace dual de 
$L^{{3\over2},1}.$ Et par suite
gr\^ace \`a la Proposition II.1 dans \cite {P_Lions} et l'\'equation que 
v\'erifie $\omega$ (\ref{tourbillon}), on d\'eduit le r\'esultat d'existence suivant.
\begin{corollaire}\label{existence}
Soit $\omega_0^\theta\in L^{{3\over 2},1}(\R^3)$ une fonction 
axisym\'etrique tel que
${\omega_0 ^\theta\over r}\in L^{{3\over 2},1}(\R^3).$ 
Soit $u_0$ le champ de vecteurs axisym\'etrique tel que 
${\mathop{\rm div}}\,u_0=0$ et avec vorticit\'e
$\omega_0=\omega_0^\theta(r,z)e_{\theta}$
donn\'e par la loi de Biot-Savart~:
$$
u_0(X)={1\over4\pi}\int_{\R^3}\frac{X-Y}{\vert X-Y\vert^3}\times\omega_0(Y)\,dY.
$$
Alors le syst\`eme ${\rm(NS_v)}$ admet une solution globale $u$ tel que la vorticit\'e $\omega$ satisfait
$$
\begin{aligned}
&\omega\in {\mathscr{C}}\big(\R_+;\,L^{{3\over 2},1}(\R^3)\big),
\hspace{1cm}
\partial_z\omega\in L^2_{loc}\big(\R_+;\,L^{{3\over 2},1}(\R^3)\big)
\\&
{\omega\over r}\in {\mathscr{C}}\big(\R_+;\,L^{{3\over 2},1}(\R^3)\big),\hspace{0,9cm}
\partial_z{\omega\over r}\in L^2_{loc}\big(\R_+;\,L^{{3\over 2},1}(\R^3)\big).
\end{aligned}
$$
De plus pour tout $t\geq 0,$ on a
$$
\|\omega(t)\|_{L^{{3\over 2},1}}
+\|\partial_z\omega\|_{L^2_t(L^{{3\over 2},1})}
\leq
C\|\omega_0\|_{L^{{3\over 2},1}}
e^{Ct^{1\over2}\|r^{-1}\omega_0\|_{L^{{3\over 2},1}}}
$$
et
$$
\|r^{-1}\omega(t)\|_{L^{{3\over 2},1}}
+\|r^{-1}\partial_z\omega\|_{L^2_t(L^{{3\over 2},1})}
\leq
C\|r^{-1}\omega_0\|_{L^{{3\over 2},1}}.
$$
\end{corollaire}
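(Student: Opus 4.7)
Mon plan consiste \`a construire la solution par un sch\'ema d'approximation-compacit\'e, en exploitant les estimations a priori d\'ej\`a \'etablies dans la Proposition \ref{Biot}, la Proposition \ref{Estimation_omega} et le Corollaire \ref{estimation_omega/r}. Je commencerais par r\'egulariser la donn\'ee initiale: soit $(\omega_0^n)_{n\in\N}$ une suite de fonctions axisym\'etriques r\'eguli\`eres \`a support compact telles que $\omega_0^n\to\omega_0$ et $r^{-1}\omega_0^n\to r^{-1}\omega_0$ dans $L^{{3\over 2},1}.$ Le champ $u_0^n$ associ\'e par la loi de Biot-Savart est axisym\'etrique, \`a divergence nulle et suffisamment r\'egulier pour qu'un sch\'ema d'approximation classique (de type Friedrichs ou semi-Galerkin) produise une solution r\'eguli\`ere $u_n$ du syst\`eme ${\rm(NS_v)},$ qui reste axisym\'etrique par unicit\'e.

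Par axisym\'etrie $\omega_n$ v\'erifie l'\'equation (\ref{tourbillon}) et $\omega_n/r$ satisfait (\ref{transport}). La Proposition \ref{Estimation_omega}, le Corollaire \ref{estimation_omega/r} et l'in\'egalit\'e (\ref{ur/r}) fournissent alors, uniform\'ement en $n,$
$$
\|\omega_n\|_{L^\infty_t(L^{{3\over 2},1})}+\|\partial_z\omega_n\|_{L^2_t(L^{{3\over 2},1})}\leq C\|\omega_0\|_{L^{{3\over 2},1}}e^{Ct^{1/2}\|r^{-1}\omega_0\|_{L^{{3\over 2},1}}},
$$
ainsi que les estimations analogues pour $\omega_n/r,$ et par la Proposition \ref{Biot} une borne uniforme sur $u_n$ dans $L^\infty_t(L^{3,1}).$ Ces bornes \'etant globales en temps, les solutions approch\'ees sont \'egalement globales.

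Le point d\'elicat est le passage \`a la limite dans les termes non lin\'eaires $(u_n\cdot\nabla)\omega_n$ et $(u_n^r/r)\omega_n.$ Gr\^ace \`a l'\'equation (\ref{tourbillon}), $\partial_t\omega_n$ est uniform\'ement born\'ee dans un espace de Sobolev \`a indice n\'egatif, ce qui, coupl\'e aux bornes spatiales pr\'ec\'edentes, permet par un argument de type Aubin-Lions d'extraire une sous-suite telle que $\omega_n$ et $u_n$ convergent fortement dans des espaces $L^p_{loc}$ ad\'equats, les d\'eriv\'ees en $z$ convergeant faiblement. L'obstacle principal est alors de donner un sens faible au produit $u\cdot\nabla\omega$ malgr\'e la r\'egularit\'e limit\'ee du champ limite: c'est pr\'ecis\'ement la port\'ee de la Proposition II.1 de \cite{P_Lions}, qui s'applique ici puisque $u\in L^\infty_{loc}(\R_+;L^{3,1})$ avec $\mathop{\rm div}\,u=0,$ la th\'eorie des solutions renormalis\'ees de DiPerna-Lions permettant d'identifier la limite comme solution faible de (\ref{tourbillon}) et de (\ref{transport}). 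L'axisym\'etrie et l'incompressibilit\'e sont conserv\'ees \`a la limite puisqu'elles sont stables par convergence faible.

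Enfin, la semi-continuit\'e inf\'erieure des normes de Lorentz par rapport \`a la convergence faible assure que les bornes \'enonc\'ees dans le corollaire sont v\'erifi\'ees par la solution limite. La continuit\'e en temps $\omega\in\mathscr{C}(\R_+;L^{{3\over 2},1})$ et $\omega/r\in\mathscr{C}(\R_+;L^{{3\over 2},1})$ r\'esulte du caract\`ere renormalis\'e des solutions limites, combin\'e avec le fait que $\partial_z\omega$ et $\partial_z(\omega/r)$ appartiennent \`a $L^2_{loc}(\R_+;L^{{3\over 2},1}).$
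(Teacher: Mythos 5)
Your proposal is correct and follows essentially the same route as the paper: the uniform bounds come from Propositions \ref{Estimation_omega} and \ref{Biot}, Corollary \ref{estimation_omega/r} and inequality (\ref{ur/r}), and the passage to the limit in the transport--diffusion equations (\ref{tourbillon}) and (\ref{transport}) rests on Proposition II.1 of \cite{P_Lions}, applicable because $u\in L^{3,1}$ lies in the dual of $L^{{3\over2},1}$; you merely spell out the approximation--compactness scaffolding that the paper leaves implicit. The only caveat is that the global regular approximate solutions should be obtained by adding a vanishing horizontal viscosity $n^{-1}\Delta_h$ (as the paper does in Section 4 for Theorem \ref{th2}) rather than by solving $(\mathrm{NS_v})$ itself with regularized data, since global regularity of $(\mathrm{NS_v})$ for smooth data is precisely what is not known a priori.
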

\noindent{\bf{D\'emonstration du Lemme \ref{Lp}}}.\\
Remarquons tout d'abord que
$$
\|\partial_if\|_{L^p}=\|\partial_i|f|\|_{L^p} \hspace{1cm}\mbox{et}\hspace{1cm}
|f|=|f|^{{p\over2}{2\over p}},
$$
ainsi, on a
$$
\partial_i|f|={p\over 2}\partial_i(|f|^{{p\over2}})|f|^{{2-p\over 2}}.
$$
Et par suite l'in\'egalit\'e de H\"older, implique que
$$
\Vert \partial_iu\Vert_{L^p}
\lesssim
\Big\Vert\partial_i\vert u\vert^{\frac{p}{2}}\Big\Vert_{L^2}
\Vert u\Vert_{L^p}^{2-p\over2}.
$$
D'o\`u le lemme. \hspace{10cm}$\square$
\subsection{Unicit\'e}
Pour d\'emontrer l'unicit\'e de solution pour le syst\`eme ${\rm(NS_v)},$
il suffit de le prouver pour l'\'equation (\ref{tourbillon}). Soient 
$\omega_1$ et $\omega_2$ deux solutions, et on d\'esignons par
$\delta\omega=\omega_2 -\omega_1$ leur diff\'erence, qui v\'erifie 
le syst\`eme suivant~:
$$
\left\{
\begin{array}{rl}
&\hspace{-0,5cm}\partial_t\delta\omega +(u_2\cdot \nabla)\delta\omega
-\partial_{z}^2\delta\omega=-(\delta u\cdot \nabla)\omega_1+{u^r_2\over r}\delta\omega
+{\delta u^r\over r}\omega_1\\
&\hspace{-0,5cm}{\delta\omega}_{|t=0}=0.
\end{array}
\right.
$$
L'espace dans lequel on va estimer la diff\'erence est $L^p$ avec
${6\over5}\le p<{3\over2}.$ Admettons  pour le moment le lemme suivant.
\begin{lemme}\label{difference}
Soient $\omega_i$ avec $1\le i\le2$ deux solutions de l'\'equation 
(\ref{tourbillon}) ayant les m\^emes donn\'ees initiales.
Supposons que pour $i=1,2$ on ait
$$
\omega_i\in L^{\infty}_t(L^{{3\over2},1}),
\hspace{0,5cm}
\partial_z\omega_i\in L^2_t(L^{{3\over2},1})
\hspace{0,5cm}\mbox{et}\hspace{0,5cm}
\partial_r\omega_i\in L^\infty_t(L^{{3\over2},1}).
$$
Alors
$$
\delta\omega\in L^\infty_t(L^p)
\hspace{1cm}
\mbox{et}
\hspace{1cm}
\partial_z|\delta\omega|^{p\over2}\in L^2_t(L^2).
$$
\end{lemme}
L'estimation d'\'energie implique que
$$
\begin{aligned}
{1\over p}{d\over dt}\|\delta\omega\|_{L^{p}}^{p}
+{4(p-1)\over p^2}
\Big\|\partial_z\vert\omega\vert^{\frac{p}{2}}\Big\|_{L^2}^2
&\le
\|{u^r_2\over r}\|_{L^\infty}\|\delta\omega\|_{L^{p}}^{p}
+\|{\omega_1\delta u^r\over r}\|_{L^{p}}\|\delta\omega\|_{L^{p}}^{p-1}
\\&
+\|(\delta u\cdot \nabla)\omega_1\|_{L^{p}}
\|\delta\omega\|_{L^{p}}^{p-1}.
\end{aligned}
$$
D'apr\`es l'in\'egalit\'e de H\"older, l'injection de Sobolev, la Proposition \ref{Biot} et le Lemme \ref{Lp}, on a
$$
\begin{aligned}
\|{\omega_1\delta u^r\over r}\|_{L^{p}}
&+\|(\delta u\cdot\nabla)\omega_1\|_{L^{p}}
\le
\big(\|{\omega_1\over r}\|_{L^{3\over2}}
+\|\partial_r\omega_1\|_{L^{3\over2}}\big)
\|\delta u^r\|_{L^{3p\over3-2p}}
\\&
+\|\partial_z\omega_1\|_{L^6_h(L^{\frac{3}{2}}_v)}
\|\delta u^z\|_{L^{6p\over6-p}_h(L^{\frac{3p}{3-2p}}_v)}
\\&
\lesssim
\big(\|{\omega_1\over r}\|_{L^{3\over2}}
+\|\partial_r\omega_1\|_{L^{3\over2}}\big)\|\partial_z\delta\omega\|_{L^{p}}
+\|\partial_z\partial_r\omega_1\|_{L^{\frac{3}{2}}}
\|\delta u^z\|_{L^{6p\over6-p}_h(L^{\frac{3p}{3-2p}}_v)}
\\&
\lesssim
\Big(\|{\omega_1\over r}\|_{L^{3\over2}}
+\|\partial_r\omega_1\|_{L^{3\over2}}\Big)
\|\partial_z\vert\delta\omega\vert^{p\over2}\|_{L^2}
\Vert\delta\omega\Vert_{L^{p}}^{2-p\over2}
+\|\partial_z\partial_r\omega_1\|_{L^{\frac{3}{2}}}
\|\delta u^z\|_{L^{6p\over6-p}_h(L^{\frac{3p}{3-2p}}_v)}.
\end{aligned}
$$
Concernant $\|\delta u^z\|_{L^{6p\over6-p}_h(L^{\frac{3p}{3-2p}}_v)}$
on utilise le fait que
$$
\Delta\delta u^z=\partial_r\delta\omega+\frac{\delta\omega}{r},
$$
et par suite par int\'egration par parties, on aura
$$
|\delta u^z|
\lesssim
\frac{1}{|\cdot|^2}\star|\delta\omega|,
$$
alors d'apr\`es les lois de convolution, on obtient
$$
\begin{aligned}
\|\delta u^z\|_{L^{6p\over6-p}_h(L^{\frac{3p}{3-2p}}_v)}
&\lesssim
\|\delta\omega\|_{L^{{6p\over6-p},{6p\over6-p}}_h(L^p_v)}
\\&
\lesssim
\|\delta\omega\|_{L^p}.
\end{aligned}
$$
Et par suite l'in\'egalit\'e de Young, implique que
$$
\begin{aligned}
{d\over dt}\|\delta\omega\|_{L^{p}}^{p}
&\le
\Big(\|{u^r_2\over r}\|_{L^\infty}+\|{\omega_1\over r}\|_{L^{3\over2}}^2
+\|\partial_r\omega_1\|_{L^{3\over2}}^2
+\|\partial_z\partial_r\omega_1\|_{L^{\frac{3}{2}}}\Big)
\|\delta\omega\|_{L^{p}}^{p}.
\end{aligned}
$$
Donc on a l'unicit\'e si  
$\partial_r\omega_1\in L^2_t(L^{3\over2})$ et 
$\|\partial_z\partial_r\omega_1\|_{L^{\frac{3}{2}}}$ puisque 
l'in\'egalit\'e (\ref{ur/r}) et le Corollaire \ref{estimation_omega/r}, impliquent
$\big(\|{u^r_2\over r}\|_{L^\infty}
+\|{\omega_1\over r}\|_{L^{3\over2}}^2)\in L^1_t.$

Dans un premi\`ere temps on d\'emontre que 
$\partial_r\omega_1\in L^2_t(L^{3\over2}).$ 
Plus exactement on prouve qu'on a propagation de la
r\'egularit\'e $\partial_r\omega$ dans l'espace de Lorentz 
$L^{{3\over2},1}$ plus l'effet r\'egularisant.
\subsection{Propagation de la r\'egularit\'e $\partial_r\omega$}
\begin{proposition}\label{partial}
Soient $\omega_0\in L^{{3\over2},1}\cap L^{3,1}$ tels que
$\omega_0/r\in L^{{3\over2},1}$ et
$\partial_r\omega_0\in L^{{3\over2},1}.$ Soit
$\partial_r\omega\in L^\infty_t(L^{{3\over2},1}),$
$\partial_z\partial_r\omega\in L^2_t(L^{{3\over2},1})$ une solution 
du syst\`eme suivant
$$
\left\{
\begin{array}{rl}
&\hspace{-0,5cm}\partial_t\partial_r\omega+(u\cdot\nabla)\partial_r\omega
-\partial_z^2\partial_r\omega
=-{u^r\over r}{\omega\over r}
+\partial_ru^r{\omega\over r}
+{u^r\over r}\partial_r\omega
-\partial_ru^r\partial_r\omega
-\partial_ru^z\partial_z\omega\\
&\hspace{-0,5cm}{\partial_r\omega}_{|t=0}=\partial_r\omega_0.
\end{array}
\right.
$$
Alors
$$
\|\partial_r\omega(t)\|_{L^{{3\over2},1}}+
\|\partial_z\partial_r\omega\|_{L^2_t(L^{{3\over2},1})}
\le
\Phi(t,\omega_0),
$$
avec
$$
\Phi(t,\omega_0)=e^{C\exp{\sqrt{t}C(\omega_0)}}
$$
\end{proposition}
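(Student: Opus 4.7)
Le plan est d'effectuer une estimation d'\'energie en $L^p$ sur $\partial_r\omega$ pour $p$ proche de $3/2$, puis de retrouver l'estimation dans l'espace de Lorentz $L^{3/2,1}$ par interpolation r\'eelle, exactement comme dans les Propositions \ref{Estimation_omega} et \ref{estimation_omega/r}. On multiplie tout d'abord l'\'equation d'\'evolution de $\partial_r\omega$ par $|\partial_r\omega|^{p-1}\mathrm{sign}\,\partial_r\omega$ et on int\`egre sur $\R^3$ : le terme de transport dispara\^it gr\^ace \`a $\mathop{\rm div}\,u=0$, et le Laplacien vertical produit le terme r\'egularisant $\frac{4(p-1)}{p^2}\|\partial_z|\partial_r\omega|^{p/2}\|_{L^2}^2$.

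On utilise ensuite l'incompressibilit\'e pour \'ecrire $\partial_ru^r=-u^r/r-\partial_zu^z$ dans les cinq termes sources. Apr\`es cette substitution, le second membre se scinde en trois contributions : (i) les termes anodins portant un facteur $u^r/r$, absorb\'es par Gronwall gr\^ace \`a $\|u^r/r\|_{L^1_t(L^\infty)}\lesssim t^{1/2}\|\omega_0/r\|_{L^{3/2,1}}$ (in\'egalit\'e (\ref{ur/r})) ; (ii) les produits du type $\partial_zu^z\cdot\omega/r$ et $\partial_zu^z\cdot\partial_r\omega$ ; (iii) le terme d\'elicat $\partial_ru^z\cdot\partial_z\omega$. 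Chacun de ces termes sera estim\'e dans $L^{3/2}$ par l'in\'egalit\'e de H\"older combin\'ee \`a la Proposition \ref{Biot}, qui exprime $\partial_zu^z$ et $\partial_ru^z$ en fonction de $\partial_z\partial_r\omega$, $\partial_z(\omega/r)$ et de quantit\'es d'ordre inf\'erieur d\'ej\`a contr\^ol\'ees.

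Pour clore l'estimation on appliquera le Lemme \ref{Lp} afin de remplacer chaque occurrence de $\|\partial_z\partial_r\omega\|_{L^{3/2}}$ par $\|\partial_z|\partial_r\omega|^{3/4}\|_{L^2}\|\partial_r\omega\|_{L^{3/2}}^{1/4}$, puis l'in\'egalit\'e de Young permettra d'absorber le facteur d'ordre le plus haut dans le terme de diffusion \`a gauche. On obtient alors une in\'egalit\'e diff\'erentielle de la forme
\[
\frac{d}{dt}\|\partial_r\omega\|_{L^{3/2}}^{3/2}+c\,\|\partial_z|\partial_r\omega|^{3/4}\|_{L^2}^2\le A(t)\,\|\partial_r\omega\|_{L^{3/2}}^{3/2}+B(t),
\]
o\`u $A(t)$ rassemble $\|u^r/r\|_{L^\infty}$, $\|\omega/r\|_{L^{3/2}}^2$, $\|\partial_z\omega\|_{L^{3/2,1}}^2$, toutes int\'egrables en temps gr\^ace au Corollaire \ref{estimation_omega/r} et \`a l'in\'egalit\'e (\ref{effet_regularisant}), et $B(t)$ rassemble les termes sources restants. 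Un argument de Gronwall standard fournit alors la borne dans $L^\infty_t(L^{3/2})$ avec l'effet r\'egularisant dans $L^2_t(L^{3/2})$ sur $\partial_z\partial_r\omega$ ; le passage \`a $L^{3/2,1}$ s'effectue ensuite par le m\^eme argument d'interpolation r\'eelle qu'\`a la fin de la preuve de la Proposition \ref{Estimation_omega}.

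La principale difficult\'e sera le terme critique $\partial_ru^z\,\partial_z\omega$ : son estimation dans $L^{3/2}$ exige un d\'ecoupage de H\"older anisotrope analogue \`a celui qui intervient dans la d\'emonstration d'unicit\'e, et le facteur produit par la Proposition \ref{Biot} fait appara\^itre $\partial_z\partial_r\omega+\partial_z(\omega/r)$, qui a exactement le m\^eme niveau de r\'egularit\'e que le terme de diffusion \`a absorber. De plus, le facteur int\'egrant contient $\|\partial_z\omega\|_{L^2_t(L^{3/2,1})}^2$, lui-m\^eme contr\^ol\'e par la simple exponentielle $\exp(Ct^{1/2}\|\omega_0/r\|_{L^{3/2,1}})$ d'apr\`es (\ref{effet_regularisant}) ; lorsqu'on l'injecte dans Gronwall, la borne sur $\|\partial_r\omega\|_{L^\infty_t(L^{3/2,1})}$ h\'erite alors de la double exponentielle annonc\'ee $\Phi(t,\omega_0)=\exp(C\exp(\sqrt{t}\,C(\omega_0)))$.
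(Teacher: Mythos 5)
Votre strat\'egie globale est bien celle du papier~: estimation d'\'energie en $L^p$ ($1<p\le2$) sur $\partial_r\omega$ avec le terme r\'egularisant $\frac{4(p-1)}{p^2}\|\partial_z|\partial_r\omega|^{p/2}\|_{L^2}^2$, substitution $\partial_ru^r=-u^r/r-\partial_zu^z$, absorption des termes en $u^r/r$ via (\ref{ur/r}), Lemme \ref{Lp} plus Young pour absorber les d\'eriv\'ees d'ordre maximal, Gronwall, puis passage \`a $L^{3/2,1}$ par interpolation r\'eelle~; et vous identifiez correctement l'origine de la double exponentielle (les quantit\'es du facteur de Gronwall sont elles-m\^emes major\'ees par $\exp(Ct^{1/2}\|\omega_0/r\|_{L^{3/2,1}})$).

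Il reste cependant un vrai trou au point que vous reconnaissez vous-m\^eme comme critique~: le terme $\partial_ru^z\,\partial_z\omega$. Vous proposez de le traiter par la Proposition \ref{Biot}, qui donne $\|\partial_ru^z\|_{L^{q,\lambda}}\lesssim\|\partial_r\omega\|_{L^{q',\lambda}}+\|\omega/r\|_{L^{q',\lambda}}$~; mais pour que le produit tombe dans $L^{3/2}$ il faut alors apparier $\partial_ru^z$ (au niveau de l'inconnue $\partial_r\omega$) avec $\partial_z\omega$ dans un espace du type $L^3$, qui n'est contr\^ol\'e par aucune des estimations a priori disponibles ($\partial_z\omega$ n'est connu que dans $L^2_t(L^{3/2,1})$)~; l'absorption que vous annoncez ne se ferme donc pas telle quelle. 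Le papier contourne cette difficult\'e par l'identit\'e alg\'ebrique $\partial_ru^z=\partial_zu^r-\omega$, qui transforme le terme en $\partial_zu^r\,\partial_z\omega+\frac12\partial_z(\omega^2)$~: ce sont alors de purs termes sources, major\'es dans $L^1_t(L^{3/2,1})$ uniquement \`a l'aide des estimations d\'ej\`a \'etablies sur $\omega$ et $\partial_z\omega$ (via $\|\partial_zu^r\|_{L^{6,2}}\lesssim\|\partial_z\omega\|_{L^2}$ et $\|\partial_z\omega^2\|_{L^{3/2,1}}\lesssim\||\omega|^{1/2}\|_{L^{6,2}}\|\partial_z|\omega|^{3/2}\|_{L^2}$), ce qui est pr\'ecis\'ement l\`a o\`u l'hypoth\`ese suppl\'ementaire $\omega_0\in L^{3,1}$ --- que votre plan n'utilise nulle part --- devient indispensable. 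De m\^eme, le terme $\int\partial_zu^z|\partial_r\omega|^p$ est trait\'e dans le papier par int\'egration par parties pour faire appara\^itre $\|u^z\|_{L^\infty}^2\lesssim\|\omega\|_{L^{3,1}}^2$ dans le facteur de Gronwall, plut\^ot que par la majoration de $\partial_zu^z$ par $\partial_z\partial_r\omega$ que sugg\`ere votre point (ii). Sans ces deux m\'ecanismes (ou un substitut explicite), votre in\'egalit\'e diff\'erentielle avec $A(t)$ int\'egrable et $B(t)$ contr\^ol\'e reste une d\'eclaration d'intention.
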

\begin{proof}
En prenant le produit scalaire au sens $L^p$ pour $1<p\le2$ de
l'\'equation qui v\'erifie $\partial_r\omega$ combin\'es avec
$\partial_ru^r=-{u^r\over r}-\partial_zu^z$ et l'in\'egalit\'e de Hardy que implique que $\|r^{-1}\omega\|_{L^p}\lesssim\|\partial_r\omega\|_{L^p},$ on trouve
$$
\begin{aligned}
{1\over p}{d\over dt}\|\partial_r\omega\|_{L^p}^p&+{4(p-1)\over p^2}
\|\partial_z|\partial_r\omega|^{p\over2}\|_{L^2}^2
\le
4\|{u^r\over r}\|_{L^\infty}\|\partial_r\omega\|_{L^p}^p
+\|\partial_zu^z{\omega\over r}\|_{L^p}\|\partial_r\omega\|_{L^p}^{p-1}
\\&
+\|\partial_ru^z\partial_z\omega\|_{L^p}\|\partial_r\omega\|_{L^p}^{p-1}
+\int\partial_zu^z|\partial_r\omega|^p.
\end{aligned}
$$
Par int\'egration par parties plus l'in\'egalit\'e de Cauchy-Schwartz, on a
$$
\int\partial_zu^z|\partial_r\omega|^p=-2\int u^z
|\partial_r\omega|^{p\over2}\partial_z|\partial_r\omega|^{p\over2}
\le
2\|u^z\|_{L^\infty}\|\partial_z|\partial_r\omega|^{p\over2}\|_{L^2}
\|\partial_r\omega\|_{L^p}^{p\over2}.
$$
Gr\^ace \`a l'in\'egalit\'e de Young et le fait que
$\partial_ru^z=\partial_zu^r-\omega,$ on obtient
$$
\|\partial_ru^z\partial_z\omega\|_{L^p}=
\|\partial_zu^r\partial_z\omega-{1\over2}\partial_z\omega^2\|_{L^p}
\le
\|\partial_zu^r\partial_z\omega\|_{L^p}
+\|\partial_z\omega^2\|_{L^p}.
$$
Et par suite
\begin{equation}\label{inegalite1}
\begin{aligned}
{1\over p}{d\over dt}\|\partial_r\omega\|_{L^p}^p&+
\|\partial_z|\partial_r\omega|^{p\over2}\|_{L^2}^2
\lesssim
\Big(\|{u^r\over r}\|_{L^\infty}+\|u^z\|_{L^\infty}^2\Big)
\|\partial_r\omega\|_{L^p}^p
\\&
+\Big(\|\partial_zu^z{\omega\over r}\|_{L^p}
+\|\partial_zu^r\partial_z\omega\|_{L^p}
+\|\partial_z\omega^2\|_{L^p}\Big)
\|\partial_r\omega\|_{L^p}^{p-1}.
\end{aligned}
\end{equation}
gr\^ace \`a l'in\'egalit\'e de H\"older et par interpolation, on trouve
$$
\begin{aligned}
\|\partial_zu^z{\omega\over r}\|_{L^p}
&\le
\|{\omega\over r}\|_{L^p_h(L^\infty_v)}
\|\partial_zu^z\|_{L^\infty_h(L^p_v)}
\\&
\lesssim
\|{\omega\over r}\|_{L^p}^{\frac{p-1}{p}}
\|\partial_z{\omega\over r}\|_{L^p}^{\frac{1}{p}}
\|\partial_zu^z\|_{L^\infty_h(L^p_v)}.
\end{aligned}
$$
Comme $\Delta \partial_zu^z=\partial_z\partial_r\omega
+\partial_z\frac{\omega}{r},$ alors par integration par parties, on trouve
$$
\partial_zu^z=-{1\over{4\pi}}\int_{\R^3} \frac{r'-r\cos\theta'}{\big(r^2+{r'}^2-2rr'\cos\theta'+(z-z')^2\big)^{\frac{3}{2}}}\,
\partial_{z'}\omega\,r'dr'dz'd\theta',
$$
et par suite
$$
|\partial_zu^z|
\lesssim
\frac{1}{|X|^2}\star|\partial_z\omega|,
$$
ainsi
$$
\|\partial_zu^z\|_{L^\infty_h(L^p_v)}
\lesssim
\|\partial_z\omega\|_{L^{2,1}_h(L^p_v)}.
$$
Comme $1<p<2,$ alors par interpolation, on a
$$
\|f\|_{L^{2,1}(\R^2)}
\lesssim
\|f\|_{L^p}^{\frac{2p-2}{p}}\|\nabla f\|_{L^p}^{\frac{2-p}{p}}.
$$
Ainsi d'apr\`es l'in\'egalit\'e de Hardy, on trouve
$$
\|\partial_zu^z{\omega\over r}\|_{L^p}
\lesssim
\|{\omega\over r}\|_{L^p}^{\frac{p-1}{p}}
\|\partial_z{\omega\over r}\|_{L^p}^{\frac{1}{p}}
\|\partial_z\omega\|_{L^p}^{\frac{2p-2}{p}}
\|\partial_r\partial_z\omega\|_{L^p}^{\frac{2-p}{p}},
$$
et par suite le Lemme \ref{Lp}, implique que
$$
\|\partial_zu^z{\omega\over r}\|_{L^p}
\lesssim
\|{\omega\over r}\|_{L^p}^{\frac{p-1}{p}}
\|\partial_z{\omega\over r}\|_{L^p}^{\frac{1}{p}}
\|\partial_z\omega\|_{L^p}^{\frac{2p-2}{p}}
\|\partial_r\omega\|_{L^p}^{\frac{(2-p)^2}{2p}}
\|\partial_z|\partial_r\omega|^{\frac{p}{2}}\|_{L^p}^{\frac{2-p}{p}},
$$
ainsi on aura gr\^ace \`a l'in\'egalit\'e de Young
\begin{equation}\label{EST7}
\begin{aligned}
\|\partial_zu^z{\omega\over r}\|_{L^p}\|\partial_r\omega\|_{L^p}^{p-1}
&\leq
c_{\varepsilon}\|{\omega\over r}\|_{L^p}^{\frac{2p-2}{3p-2}}
\|\partial_z{\omega\over r}\|_{L^p}^{\frac{2}{3p-2}}
\|\partial_z\omega\|_{L^p}^{\frac{4p-4}{3p-2}}
\|\partial_r\omega\|_{L^p}^{\frac{3p^2-6p+4}{3p-2}}
\\&
+\varepsilon \|\partial_z|\partial_r\omega|^{\frac{p}{2}}\|_{L^p}^2.
\end{aligned}
\end{equation}
Mais comme
$
p-1\leq \frac{3p^2-6p+4}{3p-2}\leq p,
$
alors
$$
\begin{aligned}
{1\over p}{d\over dt}&\|\partial_r\omega\|_{L^p}^p+
\|\partial_z|\partial_r\omega|^{p\over2}\|_{L^2}^2
\\&
\lesssim
\Big(\|{\omega\over r}\|_{L^p}^{\frac{2p-2}{3p-2}}
\|\partial_z{\omega\over r}\|_{L^p}^{\frac{2}{3p-2}}
\|\partial_z\omega\|_{L^p}^{\frac{4p-4}{3p-2}}
+\|{u^r\over r}\|_{L^\infty}+\|u^z\|_{L^\infty}^2\Big)
\|\partial_r\omega\|_{L^p}^p
\\&
+\Big(\|{\omega\over r}\|_{L^p}^{\frac{2p-2}{3p-2}}
\|\partial_z{\omega\over r}\|_{L^p}^{\frac{2}{3p-2}}
\|\partial_z\omega\|_{L^p}^{\frac{4p-4}{3p-2}}
+\|\partial_zu^r\partial_z\omega\|_{L^p}
+\|\partial_z\omega^2\|_{L^p}\Big)
\|\partial_r\omega\|_{L^p}^{p-1}.
\end{aligned}
$$
Ainsi le lemme de Gronwall, implique que
$$
\begin{aligned}
&\|\partial_r\omega(t)\|_{L^p}+
\|\partial_z|\partial_r\omega|^{p\over2}\|_{L^2_t(L^2)}^{2\over p}
\\&
\le
\Big(\|\partial_r\omega_0\|_{L^p}
+C\int_0^t\big(\|{\omega\over r}\|_{L^p}^{\frac{2p-2}{3p-2}}
\|\partial_z{\omega\over r}\|_{L^p}^{\frac{2}{3p-2}}
\|\partial_z\omega\|_{L^p}^{\frac{4p-4}{3p-2}}+\|\partial_zu^r\partial_z\omega\|_{L^p}
+\|\partial_z\omega^2\|_{L^p}\big)d\tau\Big)
\\&
\times
e^{C\int_0^t\big(\|{\omega\over r}\|_{L^p}^{\frac{2p-2}{3p-2}}
\|\partial_z{\omega\over r}\|_{L^p}^{\frac{2}{3p-2}}
\|\partial_z\omega\|_{L^p}^{\frac{4p-4}{3p-2}}+\|{u^r\over r}\|_{L^\infty}
+\|u^z\|_{L^\infty}^2
+\|{\omega_0\over r}\|_{L^{3\over2}}^2\big)d\tau},
\end{aligned}
$$
enfin le Lemme \ref{Lp} et l'in\'egalit\'e, assurent que
$$
\begin{aligned}
&\|\partial_r\omega(t)\|_{L^p}+
\|\partial_z\partial_r\omega\|_{L^2_t(L^p)}
\le
\\&
C\Big(\|\partial_r\omega_0\|_{L^p}
+\int_0^t\big(\|{\omega\over r}\|_{L^p}^2
+\|\partial_z{\omega\over r}\|_{L^p}^2
+\|\partial_z\omega\|_{L^p}^2+\|\partial_zu^r\partial_z\omega\|_{L^p}
+\|\partial_z\omega^2\|_{L^p}\big)d\tau\Big)
\\&
\times
e^{C\int_0^t\big(\|{\omega\over r}\|_{L^p}^2
+\|\partial_z{\omega\over r}\|_{L^p}^2
+\|\partial_z\omega\|_{L^p}^2+\|{u^r\over r}\|_{L^\infty}
+\|u^z\|_{L^\infty}^2
+\|{\omega_0\over r}\|_{L^{3\over2}}^2\big)d\tau}.
\end{aligned}
$$
Par d\'efinition de l'espace de l'espace de Lorentz et d'apr\`es 
\cite{BER} l'estimation pr\'ec\'edente reste vraie dans 
$L^{{\frac{3}{2}},1},$ et par suite
$$
\begin{aligned}
&\|\partial_r\omega(t)\|_{L^{{\frac{3}{2}},1}}+
\|\partial_z\partial_r\omega\|_{L^2_t(L^{{\frac{3}{2}},1})}
\le
\\&
C\Big(\|\partial_r\omega_0\|_{L^{{\frac{3}{2}},1}}
+\int_0^t\big(\|{\omega\over r}\|_{L^{{\frac{3}{2}},1}}^2
+\|\partial_z{\omega\over r}\|_{L^{{\frac{3}{2}},1}}^2
+\|\partial_z\omega\|_{L^{{\frac{3}{2}},1}}^2
+\|\partial_zu^r\partial_z\omega\|_{L^{{\frac{3}{2}},1}}
+\|\partial_z\omega^2\|_{L^{{\frac{3}{2}},1}}\big)d\tau\Big)
\\&
\times
e^{C\int_0^t\big(\|{\omega\over r}\|_{L^{{\frac{3}{2}},1}}^2
+\|\partial_z{\omega\over r}\|_{L^{{\frac{3}{2}},1}}^2
+\|\partial_z\omega\|_{L^{{\frac{3}{2}},1}}^2
+\|{u^r\over r}\|_{L^\infty}
+\|u^z\|_{L^\infty}^2
+\|{\omega_0\over r}\|_{L^{3\over2}}^2\big)d\tau}.
\end{aligned}
$$
Rappelons que
$$
\|\omega(t)\|_{L^{{3\over 2},1}}
+\|\partial_z\omega\|_{L^2_t(L^{{3\over 2},1})}
\leq
C\|\omega_0\|_{L^{{3\over 2},1}}
\exp\big(Ct^{1\over2}\|r^{-1}\omega_0\|_{L^{{3\over 2},1}}\big)
$$
et
$$
\|\frac{u^r}{r}\|_{L^1_t(L^\infty)}
\lesssim
\sqrt{t}\|\frac{\omega_0}{r}\|_{L^{{\frac{3}{2}},1}}
\qquad\mbox{et}\qquad
\|\frac{\omega}{r}\|_{L^{{\frac{3}{2}},1}}
+\|\partial_z\frac{\omega}{r}\|_{L^2_t(L^{{\frac{3}{2}},1})}
\lesssim
\|\frac{\omega_0}{r}\|_{L^{{\frac{3}{2}},1}}.
$$
Donc gr\^ace aux Propositions (\ref{Neil}) et (\ref{Biot}), on aura
$$
\begin{aligned}
\int_0^t\|\partial_zu^r\partial_z\omega\|_{L^{{\frac{3}{2}},1}}
&\le
\int_0^t\|\partial_zu^r\|_{L^{6,2}}\|\partial_z\omega\|_{L^{2,2}}
\\&
\lesssim
\int_0^t\|\partial_z\omega\|_{L^2}^2,
\end{aligned}
$$
et par suite  les in\'egalit\'es (\ref{omega}) et (\ref{ur/r}), impliquent
\begin{equation}\label{EST_1}
\int_0^t\|\partial_zu^r\partial_z\omega\|_{L^{{\frac{3}{2}},1}}
\lesssim
\|\omega_0\|_{L^2}^2
e^{Ct^{1\over2}\|{\omega_0\over r}\|_{L^{{3\over2},1}}}.
\end{equation}
Concernant $\|\partial_z\omega^2\|_{L^1_t(L^{{3\over2},1})}$ 
la Proposition \ref{Neil}, implique que
$$
\begin{aligned}
\int_0^t\|\partial_z\omega^2\|_{L^{{3\over2},1}}
&\lesssim
\int_0^t\||\omega|^{1\over2}\|_{L^{6,2}}\|\partial_z|\omega|^{3\over2}
\|_{L^2}
\\&
\lesssim
t^{1\over2}\|\omega\|_{L^\infty_t(L^{3,1})}^{{1\over2}}
\|\partial_z|\omega|^{3\over2}\|_{L^2_t(L^2)},
\end{aligned}
$$
ainsi les in\'egalit\'es (\ref{Prod-Scal}) et (\ref{ur/r}) et la Remarque 
\ref{p-qq}, impliquent
\begin{equation}\label{EST_8}
\begin{aligned}
\int_0^t\|\partial_z\omega^2\|_{L^{{3\over2},1}}
\lesssim
t^{1\over2}\|\omega_0\|_{L^{3,1}}^2
e^{Ct^{1\over2}\|\omega_0/r\|_{L^{{3\over2},1}}}.
\end{aligned}
\end{equation}
Pour $\|u^z\|_{L^2_t(L^\infty)}^2$ la Proposition \ref{Biot} et la Remarque \ref{p-qq} entra\^inent
\begin{equation}\label{EST_4}
\int_0^t\|u^z\|_{L^\infty}^2
\lesssim
\int_0^t\|\omega\|_{L^{3,1}}^2
\lesssim
t\|\omega_0\|_{L^{3,1}}^2e^{Ct^{1\over2}\|{\omega_0\over r}\|_{L^{{3\over2},1}}}.
\end{equation}
D'o\`u la proposition.
\end{proof}
En fait on peut travailler avec des donn\'ees moins r\'eguli\`eres mais
le prix \`a payer est l'absence d'un contr\^ole explicite de la solution en fonction de la donn\'ee initiale. Ceci est pr{\'e}cis{\'e}
dans la proposition suivante.
\begin{proposition}\label{partial_r}
Soient $\omega_0\in L^{{3\over2},1}$ tels que
$\omega_0/r\in L^{{3\over2},1}$ et
$\partial_r\omega_0\in L^{{3\over2},1}.$ Soit
$\partial_r\omega\in L^\infty_t(L^{{3\over2},1}),$
$\partial_z\partial_r\omega\in L^2_t(L^{{3\over2},1})$ une solution du syst\`eme suivant
$$
\left\{
\begin{array}{rl}
&\hspace{-0,5cm}\partial_t\partial_r\omega+(u\cdot\nabla)\partial_r\omega
-\partial_z^2\partial_r\omega
=-{u^r\over r}{\omega\over r}
+\partial_ru^r{\omega\over r}
+{u^r\over r}\partial_r\omega
-\partial_ru^r\partial_r\omega
-\partial_ru^z\partial_z\omega\\
&\hspace{-0,5cm}{\partial_r\omega}_{|t=0}=\partial_r\omega_0.
\end{array}
\right.
$$
Alors
$$
\|\partial_r\omega(t)\|_{L^{{3\over2},1}}+
\|\partial_z\partial_r\omega\|_{L^2_t(L^{{3\over2},1})}
\le
\gamma(t,\omega_0).
$$
\end{proposition}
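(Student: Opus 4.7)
The plan is to follow exactly the scheme of the proof of Proposition \ref{partial}: perform the $L^p$ energy estimate for the equation on $\partial_r\omega$ with $1<p\le 2$ (the relevant case being $p=3/2$), then extend to the Lorentz scale by real interpolation along the lines of (\ref{Lorentz_omega}). The only novelty is the weakening of the hypothesis on $\omega_0$: the quantities $\|\omega_0\|_{L^2}$ and $\|\omega_0\|_{L^{3,1}}$ that entered the bounds (\ref{EST_1}), (\ref{EST_8}) and (\ref{EST_4}) through Remark \ref{p-qq} are no longer available and must be replaced by quantities that we control only through the solution itself, at the price of an implicit global bound $\gamma(t,\omega_0)$.

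Concretely, I would first reproduce the differential inequality (\ref{inegalite1}) verbatim, and then revisit each of the three problematic source terms together with the coefficient multiplying $\|\partial_r\omega\|_{L^p}^p$. For the coefficient $\|u^r/r\|_{L^\infty}+\|u^z\|_{L^\infty}^2$, Proposition \ref{Biot} yields $\|u^r/r\|_{L^\infty}\lesssim\|\partial_z(\omega/r)\|_{L^{3/2,1}}$ (integrable in time by Corollary \ref{estimation_omega/r}) and $\|u^z\|_{L^\infty}\lesssim\|\partial_r\omega\|_{L^{3/2,1}}+\|\omega/r\|_{L^{3/2,1}}$, producing a feedback $\int_0^t\|\partial_r\omega\|_{L^{3/2,1}}^2\,d\tau$ inside the Gronwall exponential. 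For the terms $\|\partial_z u^r\partial_z\omega\|_{L^{3/2,1}}$ and $\|\partial_z\omega^2\|_{L^{3/2,1}}$, which in Proposition \ref{partial} were controlled by the data-dependent quantities $\|\omega\|_{L^{3,1}}$ and $\|\omega_0\|_{L^2}$, I would instead invoke the Sobolev embedding $W^{1,3/2,1}(\R^3)\hookrightarrow L^{3,1}(\R^3)$, which in the axisymmetric setting gives
\[
\|\omega(t)\|_{L^{3,1}}\lesssim \|\omega(t)\|_{L^{3/2,1}}+\|\partial_r\omega(t)\|_{L^{3/2,1}}+\|\partial_z\omega(t)\|_{L^{3/2,1}}+\|\omega(t)/r\|_{L^{3/2,1}}.
\]
Three of the four terms on the right are globally controlled by the initial data through Proposition \ref{Estimation_omega} and Corollary \ref{estimation_omega/r}; the fourth one again produces a $\partial_r\omega$ feedback. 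The same substitution, combined with the estimate (\ref{EST7}) and interpolation in $z$, handles the remaining source $\|\partial_z u^z\cdot\omega/r\|_{L^{3/2,1}}$.

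The main obstacle is closing the resulting nonlinear feedback loop. After all substitutions, the right-hand side of the $L^{3/2}$ energy estimate reduces to expressions of the form $C(t)\bigl(1+\|\partial_r\omega\|_{L^{3/2,1}}^{3/2}\bigr)^{k}$ with $C(t)\in L^1_{\mathrm{loc}}(\R_+)$ depending only on the initial data, plus powers of $\|\partial_z\partial_r\omega\|_{L^{3/2}}$ that can be absorbed into the dissipation $\|\partial_z|\partial_r\omega|^{3/4}\|_{L^2}^2$ via Lemma \ref{Lp} and Young's inequality. The resulting scalar differential inequality no longer admits the explicit double-exponential solution of Proposition \ref{partial}, but since $\partial_r\omega$ is assumed a priori in $L^\infty_{\mathrm{loc}}(L^{3/2,1})$ in the statement, a standard nonlinear Gronwall argument produces a finite bound $\gamma(t,\omega_0)$ for every $t\ge 0$. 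The passage from $L^{3/2}$ to $L^{3/2,1}$ is then obtained by real interpolation of the linear operators $\omega_0\mapsto\partial_r\omega$ and $\omega_0\mapsto\partial_z\partial_r\omega$, exactly as in the proof of Proposition \ref{Estimation_omega} and following \cite{BER}.
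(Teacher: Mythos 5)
Your overall scheme (redo the $L^p$ energy estimate on $\partial_r\omega$, replace every occurrence of $\|\omega_0\|_{L^{3,1}}$ and $\|\omega_0\|_{L^2}$ by quantities controlled through the solution itself, then pass to the Lorentz scale) is indeed the first half of the paper's argument, and the substitution you propose via $\|\omega\|_{L^{3,1}}\lesssim\|\nabla\omega\|_{L^{{3\over2},1}}$ is in the same spirit as what the paper does term by term. The genuine gap is at the point where you claim that ``a standard nonlinear Gronwall argument produces a finite bound $\gamma(t,\omega_0)$ for every $t\ge0$''. Once $\|\omega\|_{L^{3,1}}$, $\|u^z\|_{L^\infty}$, etc., are re-expressed through $\|\partial_r\omega\|$, the resulting differential inequality --- the paper's (\ref{conclusion}) --- has a strictly superlinear right-hand side in $y=\|\partial_r\omega\|_{L^p}^p$ (powers such as $\frac{p^2}{p-1}>p$ or $p\,\frac{2p^2+3-3p}{2p^2-p-1}>p$ for $p<2$). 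Such an inequality can only be integrated up to the blow-up time of the comparison ODE, so it yields a bound depending on $(t,\omega_0)$ only on some interval $[0,T]$ with $T$ determined by the data. The qualitative hypothesis $\partial_r\omega\in L^\infty_t(L^{{3\over2},1})$ does not rescue you: it guarantees the left-hand side is finite, but not that it is dominated by a function of $(t,\omega_0)$ alone, which is what $\gamma(t,\omega_0)$ must be.

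The paper closes this gap with a second step that is missing from your proposal: a bootstrap through the smoothing effect. The local estimate gives $\partial_z\partial_r\omega\in L^2_T(L^{{3\over2},1})$, hence for some $t_1\in(0,T]$ one has $\partial_r\omega(t_1)$, $\partial_z\omega(t_1)$ and $\omega(t_1)/r$ all in $L^{{3\over2},1}$, i.e. $\nabla\omega(t_1)\in L^{{3\over2},1}$, and therefore $\omega(t_1)\in L^{{3\over2},1}\cap L^{3,1}$ by Sobolev embedding --- which is exactly the hypothesis of Proposition \ref{partial}. Restarting the flow at time $t_1$ and invoking the explicit global bound $\Phi$ of Proposition \ref{partial} then produces the global, albeit implicit, bound $\gamma(t,\omega_0)$. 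Without this relay through Proposition \ref{partial}, your argument establishes only local-in-time propagation of the regularity $\partial_r\omega\in L^{{3\over2},1}$.
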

\begin{proof}
La preuve s'effectue en deux \'etapes, on d\'emontre premi\`erement qu'on a propagation de la r\'egularit\'e localement  et apr\`es on en 
d\'eduit globalement gr\^ace a l'effet r\'egularisant et la Proposition 
\ref{partial}.
En prenant le produit scalaire au sens $L^p$ pour $1<p\le2$ de
l'\'equation qui v\'erifie $\partial_r\omega$ combin\'es avec
$\partial_ru^r=-{u^r\over r}-\partial_zu^z$ et l'in\'egalit\'e de Hardy que implique
que $\|r^{-1}\omega\|_{L^p}\lesssim\|\partial_r\omega\|_{L^p},$ on trouve
\begin{equation}\label{Majo1}
\begin{aligned}
{1\over p}{d\over dt}\|\partial_r\omega\|_{L^p}^p+&{4(p-1)\over p^2}
\|\partial_z|\partial_r\omega|^{p\over2}\|_{L^2}^2
\le
4\|{u^r\over r}\|_{L^\infty}\|\partial_r\omega\|_{L^p}^p
\\&
+\int \partial_zu^z{\omega\over r}|\partial_r\omega|^{p-1}
+\int \partial_ru^z\partial_z\omega\partial_r\omega^{p-1}
+\int\partial_zu^z|\partial_r\omega|^p
\\&
\lesssim
\|{u^r\over r}\|_{L^\infty}\|\partial_r\omega\|_{L^p}^p
+\|\partial_zu^z{\omega\over r}\|_{L^p}\|\partial_r\omega\|_{L^p}^{p-1}
\\&
+\int \partial_ru^z\partial_z\omega|\partial_r\omega|^{p-1}
+\int\partial_zu^z|\partial_r\omega|^p.
\end{aligned}
\end{equation}
Rappelons que d'apr\`es l'in\'egalit\'e (\ref{EST7}), on a
$$
\begin{aligned}
\|\partial_zu^z{\omega\over r}\|_{L^p}\|\partial_r\omega\|_{L^p}^{p-1}
&\leq
c_{\varepsilon}\|{\omega\over r}\|_{L^p}^{\frac{2p-2}{3p-2}}
\|\partial_z{\omega\over r}\|_{L^p}^{\frac{2}{3p-2}}
\|\partial_z\omega\|_{L^p}^{\frac{4p-4}{3p-2}}
\|\partial_r\omega\|_{L^p}^{\frac{3p^2-6p+4}{3p-2}}
\\&
+\varepsilon \|\partial_z|\partial_r\omega|^{\frac{p}{2}}\|_{L^p}^2.
\end{aligned}
$$
Concernant Le terme
$$
\int\partial_r u^z \partial_z\omega |\partial_r\omega|^{p-1},
$$
on utilise le fait que $\partial_r u^z=\partial_z u^r-\omega$ et
 l'in\'egalit\'e de Minkowski, on obtient
$$
\begin{aligned}
\int\partial_r u^z \partial_z\omega &|\partial_r\omega|^{p-1}
\leq (\|\omega\partial_z\omega\|_{L^p}+\|\partial_z u^r\partial_z\omega\|_{L^p})\|\partial_r\omega\|_{L^p}^{p-1}
\\&
\leq
\big(\|\omega\|_{L^\infty_v(L^{2p}_h)}
+\|\partial_z u^r\|_{L^\infty_v (L^{2p}_h)}\big)
\|\partial_z\omega\|_{L^p_v(L^{2p}_h)}
\|\partial_r \omega\|_{L^p}^{p-1}
\\&
\leq
\big(\|\omega\|_{L^\infty_v(L^{2p}_h)}
+\|\partial_z u^r\|_{L^{2p}_h(L^\infty_v)}\big)
\|\partial_z\omega\|_{L^p_v(L^{2p}_h)}
\|\partial_r \omega\|_{L^p}^{p-1}.
\end{aligned}
$$
On se rappelle maintenant que $\partial_z\omega, \partial_z(\frac{\omega}{r})$ ainsi que $\partial_z\partial_r\omega$ sont dans $L^p.$ Donc, on a $\partial_z \omega\in L^p_v(W^{1,p}(\R^2_h))$, et comme $W^{1,p}(\R^2_h)\subset L^{p}(\R^2_h)\cap L^{\frac{2p}{2-p}}(\R^2_h)\subset L^{2p}(\R^2_h).$
Donc par interpolation et gr\^ace \`a l'in\'egalit\'e de Sobolev, on trouve
\begin{equation}\label{A}
\begin{aligned}
\|\partial_z\omega\|_{L^p_vL^{2p}_h}
&\lesssim
\|\partial_z\omega\|_{L^p}^{1-\frac{1}{p}}
\|\partial_z\omega\|_{L^p_v(L^{\frac{2p}{2-p}}_h)}^{\frac{1}{p}}
\\&
\lesssim
\|\partial_z\omega\|_{L^p}^{1-\frac{1}{p}}
\big(\|\partial_r\partial_z\omega\|_{L^p}
+\|\partial_z\frac{\omega}{r}\|_{L^p}\big)^{\frac{1}{p}}.
\end{aligned}
\end{equation}
D'autre part,
$$
\begin{aligned}
\|\omega\|_{L^\infty_v L^{2p}_h}
&\leq
\|\omega\|_{L^p_v(L^{2p}_h)}^{1-\frac{1}{p}}
\|\partial_z\omega\|_{L^p_vL^{2p}_h}^{1\over p}
\\&
\leq
\|\omega\|_{L^p}^{(1-\frac{1}{p})^2}
\big(\|\partial_r\omega\|_{L^p}
+\|\frac{\omega}{r}\|_{L^p}\big)^{\frac{1}{p}(1-\frac{1}{p})}
\\&
\times
\|\partial_z\omega\|_{L^p}^{\frac{1}{p}(1-\frac{1}{p})}
\big(\|\partial_z\partial_r\omega\|_{L^p}
+\|\partial_z\frac{\omega}{r}\|_{L^p}\big)^{\frac{1}{p^2}}.
\end{aligned}
$$
Concernant $\partial_z u^r,$ on utilise le fait que
\begin{equation}\label{C}
u^r=\frac{1}{|X|}\ast\partial_z\omega,
\end{equation}
on trouve
$$
\|\partial_z u^r\|_{L^\infty_v}
\lesssim
\frac{1}{{\sqrt{x^2+y^2}}^{1+\frac{1}{p}}}\ast\Vert\partial_z\omega\Vert_{L^p_v},
$$
ainsi pour $p\leq2,$ on trouve par interpolation
\begin{equation}\label{B}
\begin{aligned}
\|\partial_z u^r\|_{L^{2p}_h(L^\infty_v)}
&\lesssim
\Vert\partial_z\omega\Vert_{L^{2,1}_h(L^p_v)}
\\&
\lesssim
\Vert\partial_z\omega\Vert_{L^p}^{\frac{2(p-1)}{p}}
\Vert\nabla_h\partial_z\omega\Vert_{L^p}^{\frac{2-p}{p}}
\\&
\lesssim
\Vert\partial_z\omega\Vert_{L^p}^{\frac{2(p-1)}{p}}
\big(\|\partial_r\partial_z\omega\|_{L^p}
+\|\partial_z\frac{\omega}{r}\|_{L^p}\big)^{\frac{2-p}{p}}.
\end{aligned}
\end{equation}
En tenu compte de l'in\'egalit\'e de Hardy que implique que
$$
\|\frac{\omega}{r}\|_{L^p}\lesssim \|\partial_r\omega\|_{L^p}
\qquad\hbox{et}\qquad
\|\partial_z\frac{\omega}{r}\|_{L^p}\lesssim
\|\partial_z\partial_r\omega\|_{L^p}
\qquad\mbox{pour}\quad p\neq2,
$$
on obtient
$$
\begin{aligned}
\int\partial_r u^z \partial_z\omega |\partial_r\omega|^{p-1}
&\lesssim
\Big(\|\omega\|_{L^p}^{(1-\frac{1}{p})^2}
\|\partial_r\omega\|_{L^p}^{\frac{1}{p}(1-\frac{1}{p})}
\|\partial_z\omega\|_{L^p}^{\frac{1}{p}(1-\frac{1}{p})}
\|\partial_z\partial_r\omega\|_{L^p}^{\frac{1}{p^2}}
\\&
+
\Vert\partial_z\omega\Vert_{L^p}^{\frac{2(p-1)}{p}}
\|\partial_r\partial_z\omega\|_{L^p}^{\frac{2-p}{p}}\Big)
\|\partial_z\omega\|_{L^p}^{1-\frac{1}{p}}
\|\partial_r\partial_z\omega\|_{L^p}^{\frac{1}{p}}\|\partial_r\omega\|_{L^p}^{p-1}
\\&
\lesssim
\|\omega\|_{L^p}^{(1-\frac{1}{p})^2}\|\partial_z\omega\|_{L^p}^{1-\frac{1}{p^2}}
\|\partial_r\omega\|_{L^p}^{p-1+\frac{1}{p}(1-\frac{1}{p})}
\|\partial_r\partial_z\omega\|_{L^p}^{\frac{1}{p}(1+\frac{1}{p})}
\\&
+\Vert\partial_z\omega\Vert_{L^p}^{\frac{3(p-1)}{p}}
\|\partial_r\omega\|_{L^p}^{p-1}\|\partial_r\partial_z\omega\|_{L^p}^{\frac{3-p}{p}}
\end{aligned}
$$
gr\^ace au Lemme \ref{Lp} et l'in\'egalit\'e de Young, on aura
$$
\begin{aligned}
\|\partial_r\partial_z\omega\|_{L^p}^{\frac{1}{p}+\frac{1}{p^2}}
&\|\partial_r \omega\|_{L^p}^{p+\frac{1}{p}-\frac{1}{p^2}-1}
\|\omega\|_{L^p}^{(1-\frac{1}{p})^2}
\|\partial_z\omega\|_{L^p}^{1-\frac{1}{p^2}}
\\&
\lesssim
\|\partial_z|\partial_r\omega|^{\frac{p}{2}}
\|_{L^2}^{\frac{1}{p}+\frac{1}{p^2}}
\|\partial_r \omega\|_{L^p}^{p+\frac{3}{2p}-\frac{3}{2}}
\|\omega\|_{L^p}^{(1-\frac{1}{p})^2}
\|\partial_z\omega\|_{L^p}^{1-\frac{1}{p^2}}
\\&
\leq
\varepsilon\|\partial_z\vert\partial_r\omega\vert^{\frac{p}{2}}\|_{L^2}^2
+c_{\varepsilon}\|\omega\|_{L^p}^{2(p-1)\over{2p+1}}
\|\partial_z\omega\|_{L^p}^{{2(p+1)}\over{2p+1}}
\|\partial_r \omega\|_{L^p}^{p\,\frac{2p^2+3-3p}{2p^2-p-1}}
\end{aligned}
$$
et
$$
\begin{aligned}
\Vert\partial_z\omega\Vert_{L^p}^{\frac{3(p-1)}{p}}
\|\partial_r\omega\|_{L^p}^{p-1}\|\partial_r\partial_z\omega\|_{L^p}^{\frac{3-p}{p}}
\leq
\varepsilon\|\partial_z\vert\partial_r\omega\vert^{\frac{p}{2}}\|_{L^2}^2
+c_{\varepsilon}\Vert\partial_z\omega\Vert_{L^p}^2
\|\partial_r\omega\|_{L^p}^{\frac{3p^2-7p+6}{3p-3}}.
\end{aligned}
$$
Donc pour $1<p\leq2$
\begin{equation}\label{EST3}
\begin{aligned}
\int&\partial_r u^z \partial_z\omega |\partial_r\omega|^{p-1}
\leq
2\varepsilon\|\partial_z\vert\partial_r\omega\vert^{\frac{p}{2}}\|_{L^2}^2
\\&
+c_{\varepsilon}\|\omega\|_{L^p}^{2(p-1)\over{2p+1}}
\|\partial_z\omega\|_{L^p}^{{2(p+1)}\over{2p+1}}
\|\partial_r \omega\|_{L^p}^{p\,\frac{2p^2+3-3p}{2p^2-p-1}}
+c_{\varepsilon}\Vert\partial_z\omega\Vert_{L^p}^2
\|\partial_r\omega\|_{L^p}^{\frac{3p^2-7p+6}{3p-3}}.
\end{aligned}
\end{equation}
Enfin concernant le terme $\int\partial_zu^z|\partial_r\omega|^p,$ 
par int\'egration par parties plus l'in\'egalit\'e de Cauchy-Schwartz, on a
\begin{equation}\label{3/2}
\begin{aligned}
\int\partial_zu^z|\partial_r\omega|^p=&-2\int u^z
|\partial_r\omega|^{p\over2}\partial_z|\partial_r\omega|^{p\over2}.
\\&
\lesssim
\| u^z\|_{L^\infty}\|\partial_r\omega\Vert_{L^p}^{\frac{p}{2}}
\big\|\partial_z|\partial_r\omega|^{p\over2}\big\|_{L^2}.
\end{aligned}
\end{equation}
Comme $\Delta u^z=\partial_r\omega+\frac{\omega}{r},$ alors par integration par parties, on trouve
$$
u^z=-{1\over{4\pi}}\int_{\R^3} \frac{r'-r\cos\theta'}{\big(r^2+{r'}^2-2rr'\cos\theta'+(z-z')^2\big)^{\frac{3}{2}}}\,
\omega\,r'dr'dz'd\theta',
$$
et par suite
$$
|u^z|
\lesssim
\frac{1}{|X|^2}\star|\omega|,
$$
alors
$$
\|u^z\|_{L^\infty_h}
\lesssim
\frac{1}{|z|^{\frac{2-p}{p}}}\ast\|\omega\|_{L^{\frac{2p}{2-p}}_h},
$$
ainsi l'injection de Sobolev et l'in\'egalit\'e de Hardy, impliquent
$$
\begin{aligned}
\|u^z\|_{L^{\infty}_v(L^\infty_h)}
&\lesssim
\|\omega\|_{L^{{\frac{p}{p-1}},1}_v(L^{\frac{2p}{2-p}}_h)}
\\&
\lesssim
\|\partial_r\omega\|_{L^{{\frac{p}{p-1}},1}_v(L^p_h)}.
\end{aligned}
$$
Or par interpolation
$$
\|f\|_{L^{{\frac{p}{p-1}},1}(\R)}
\lesssim
\|f\|_{L^p}^{\frac{2p-2}{p}}\|\nabla f\|_{L^p}^{\frac{2-p}{p}},
$$
ainsi pour $1<p<2,$ on obtient gr\^ace au Lemme \ref{Lp}
$$
\begin{aligned}
\|u^z\|_{L^{\infty}}
&\lesssim
\|\partial_r\omega\|_{L^p}^{\frac{2p-2}{p}}
\|\partial_z\partial_r\omega\|_{L^p}^{\frac{2-p}{p}}
\\&
\lesssim
\|\partial_r\omega\|_{L^p}^{\frac{p}{2}}
\|\partial_z|\partial_r\omega|^{\frac{p}{2}}\|_{L^2}^{\frac{2-p}{p}}
\end{aligned}
$$
En injectant l'in\'egalit\'e pr\'ec\'edente dans l'in\'egalit\'e (\ref{3/2}) et on utilisant l'in\'egalit\'e de Young, on trouve pour $1<p<2$
\begin{equation}\label{EST4}
\int\partial_zu^z|\partial_r\omega|^p
\leq
c_{\varepsilon}\Vert \partial_r\omega\Vert_{L^p}^{\frac{p^2}{p-1}}
+\varepsilon\big\|\partial_z|\partial_r\omega|^{p\over2}\big\|_{L^2}^2.
\end{equation}
Donc les in\'egalit\'es (\ref{Majo1}), (\ref{EST7}), (\ref{EST3}) et 
(\ref{EST4}), impliquent
\begin{equation}\label{conclusion}
\begin{aligned}
{d\over dt}\|\partial_r\omega\|_{L^p}^p+&
\Big\|\partial_z|\partial_r\omega|^{p\over2}\Big\|_{L^2}^2
\lesssim
\|{u^r\over r}\|_{L^\infty}\|\partial_r\omega\|_{L^p}^p
+\Vert\partial_z\omega\Vert_{L^p}^2
\|\partial_r\omega\|_{L^p}^{\frac{3p^2-7p+6}{3p-3}}
\\&
+\|{\omega\over r}\|_{L^p}^{\frac{2p-2}{3p-2}}
\Big\|\partial_z{\omega\over r}\Big\|_{L^p}^{\frac{2}{3p-2}}
\|\partial_z\omega\|_{L^p}^{\frac{4p-4}{3p-2}}
\|\partial_r\omega\|_{L^p}^{\frac{3p^2-4p+2}{3p-2}}
\\&
+\|\omega\|_{L^p}^{{2(p-1)}\over{2p+1}}
\|\partial_z\omega\|_{L^p}^{{2(p+1)}\over{2p+1}}
\|\partial_r \omega\|_{L^p}^{p\,\frac{2p^2+3-3p}{2p^2-p-1}}
+\Vert \partial_r\omega\Vert_{L^p}^{\frac{p^2}{p-1}}.
\end{aligned}
\end{equation}
En integrant l'in\'egalit\'e pr\'ec\'edente et on tenu compte des 
l'in\'galit\'es (\ref{ur/r}) et de Hardy, on obtient
$$
\begin{aligned}
\|\partial_r\omega\|_{L^\infty_t(L^p)}^p+&
\Big\|\partial_z|\partial_r\omega|^{p\over2}\Big\|_{L^2_t(L^2)}^2
\lesssim
\|\partial_r\omega_0\|_{L^p}^p
+t^{\frac{1}{2}}\|{\omega_0\over r}\|_{L^{{3\over2},1}}
\|\partial_r\omega\|_{L^\infty_t(L^p)}^p
\\&
+\Vert\partial_z\omega\Vert_{L^2_t(L^p)}^2
\|\partial_r\omega\|_{L^\infty_t(L^p)}^{\frac{3p^2-7p+6}{3p-3}}
\\&
+t^{\frac{p-1}{3p-2}}
\Big\|\partial_z{\omega\over r}\Big\|_{L^2_t(L^p)}^{\frac{2}{3p-2}}
\|\partial_z\omega\|_{L^2_t(L^p)}^{\frac{2p-2}{3p-2}}
\|\partial_r\omega\|_{L^\infty_t(L^p)}^{\frac{3p^2-2p}{3p-2}}
\\&
+t^{\frac{p}{2p+1}}\|\omega\|_{L^\infty_t(L^p)}^{{2(p-1)}\over{2p+1}}
\|\partial_z\omega\|_{L^2_t(L^p)}^{{2(p+1)}\over{2p+1}}
\|\partial_r \omega\|_{L^\infty_t(L^p)}^{p\,\frac{2p^2+3-3p}{2p^2-p-1}}
+t\Vert \partial_r\omega\Vert_{L^\infty_t(L^p)}^{\frac{p^2}{p-1}}.
\end{aligned}
$$
Et par suite pour $1<p<2,$ il existe $T>0$ tels que
$
\partial_r\omega\in L^\infty_T(L^p)$
et
$\partial_z\partial_r\omega\in L^2_T(L^p),$ ainsi  il existe $t_0\in[0,T]$ tels que
$\partial_r\omega(t_0)\in L^p$
et $\partial_z\partial_r\omega(t_0)\in L^2_T(L^p).$  Par d\'efinition de l'espace de Lorentz, on en d\'eduire les m\^emes r\'esultats. Ainsi il existe $t_1$ tel que
$\omega(t_1)\in L^{{3\over2},1}\cap L^{3,1}.$ Pour conclure la 
d\'emonstration il suffit d'utilis\'e la Proposition \ref{partial}. 
D'o\`u la proposition.
\end{proof}

\noindent{\bf{D\'emonstration du Lemme \ref{difference}}}.\\
Pour d\'emontrer que $\delta\omega\in L^\infty_t(L^p)$ et
$\partial_z|\omega|^{p\over2}\in L^2_t(L^2)$ il suffit de prouver que
$(u_2\cdot \nabla)\delta\omega+(\delta u\cdot \nabla)\omega_1
-{u^r_2\over r}\delta\omega-{\delta u^r\over r}\omega_1\in L^1_t(L^p)$ pour $p\leq{3\over2}.$ D'apr\`es l'in\'egalit\'e de H\"older, par interpolation et gr\^ace \`a la Proposition \ref{Biot} 
et \cite{TARTAR}, on a
$$
\begin{aligned}
\|(u_2\cdot \nabla)\delta\omega\|_{L^p}
&\le
\|u_2\|_{L^{3p\over3-2p}}\sum_{i=1}^2(\|\partial_r\omega_i\|_{L^{3\over2}}+
(\|\partial_z\omega_i\|_{L^{3\over2}})
\\&
\le
\|u_2\|_{L^3}^{3-2p\over p}\|u_2\|_{L^\infty}^{3(p-1)\over p}
\sum_{i=1}^2(\|\partial_r\omega_i\|_{L^{3\over2}}+\|\partial_z\omega_i\|_{L^{3\over2}})
\\&
\lesssim
\|\omega_2\|_{L^{3\over2}}^{3-2p\over p}\|\omega_2\|_{L^{3,1}}^{3(p-1)\over p}
\sum_{i=1}^2(\|\partial_r\omega_i\|_{L^{3\over2}}+\|\partial_z\omega_i\|_{L^{3\over2}})
\\&
\lesssim
\|\omega_2\|_{L^{3\over2}}^{3-2p\over p}
\big(\|\partial_r\omega_2\|_{L^{{3\over2},1}}+
\|{\omega_2\over r}\|_{L^{{3\over2},1}}+\|\partial_z\omega_2\|_{L^{{3\over2},1}}\big)^{3(p-1)\over p}
\\&
\times
\sum_{i=1}^2(\|\partial_r\omega_i\|_{L^{3\over2}}+\|\partial_z\omega_i\|_{L^{3\over2}})
\end{aligned}
$$
et par suite les deux propositions pr\'ec\'edentes combin\'ee avec le Corollaire \ref{existence}, impliquent 
$(u_2\cdot\nabla)\delta\omega\in L^1_t(L^p),$ les m\^emes calculs donnent $(\delta u\cdot \nabla)\omega_1\in L^1_t(L^p).$
Pour ${u^r_2\over r}\delta\omega$ gr\^ace \`a l'in\'egalit\'e de H\"older, par interpolation et la Proposition \ref{Biot}, on obtient
$$
\begin{aligned}
\|{u^r_2\over r}\delta\omega\|_{L^p}
\le
\|u^r_2\|_{L^{3p\over3-2p}}\|{\delta\omega\over r}\|_{L^{3\over2}}
\le
&\sum_{i=1}^2\|{\omega_i\over r}\|_{L^{3\over2}}\|u^r_2\|_{L^3}^{3-2p\over p}
\|u^r_2\|_{L^\infty}^{3(p-1)\over p}
\\&
\lesssim
\sum_{i=1}^2\|{\omega_i\over r}\|_{L^{3\over2}}\|\omega_2\|_{L^{3\over2}}^{3-2p\over p}
\|\partial_z\omega_2\|_{L^{{3\over2},1}}^{3(p-1)\over p}.
\end{aligned}
$$
Et par suite le Corollaire \ref{existence} et le fait que
${3(p-1)\over p}\le2$ impliquent ${u^r_2\over r}\delta\omega
\in L^1_t(L^p)$
les m\^emes calculs donnent ${\delta u^r\over r}\omega_1
\in L^1_t(L^p).$
D'o\`u le lemme. \hspace{1cm}$\square$

\section{Existence pour des donn\'ees moins 
r\'eguli\`eres}

Dans cette partie nous d\'emontrons le Th\'eor\`eme \ref{th2} d'existence des solutions pour des donn\'ees initiales moins 
r\'eguli\`eres. Pour cela nous avons besoin de prendre en compte encore plus des estimations anisotropes sur $\frac{u^r}{r}$. 
Nous avons, pour tout $1<p\leq \frac 32$,  l'in\'egalit\'e suivante
$$
\|\frac{u^r}{r}\|_{L^\infty_h(L^{\frac{p}{3-2p}}_v)}
\leq 
C\|\partial_z\frac{\omega}{r}\|_{L^{p,1}}.
$$
En effet: d'apr\`es les estimations de la Proposition \ref{Biot} on a
$$
|\frac{u^r}{r}|
\lesssim
\frac{1}{|X|}\star|\partial_z\frac{\omega}{r}|.
$$
Donc
$$
\|\frac{u^r}{r}\|_{L^\infty_h}
\lesssim
\|\frac{1}{\sqrt{|X_h|^2+z^2}}\|_{L^{p'}_h}\star
\|\partial_z\frac{\omega}{r}\|_{L^p_h}
$$
En utilisant le fait que la primitive de $r(r^2+z^2)^{-{p'\over2}}$ 
est $\sqrt{r^2+z^2}^{2-p'}$ a une constante pr\`es, on trouve
$$
\|\frac{u^r}{r}\|_{L^\infty_h}
\lesssim 
\frac{1}{|z|^{\frac{2}{p}-1}}\star\|\partial_z\frac{\omega}{r}\|_{L^p_h}.
$$
On prend maintenant la norme $L^{\frac{p}{3-2p}}$ en variable verticale pour obtenir
$$
\|\frac{u^r}{r}\|_{L^\infty_h(L^{\frac{p}{3-2p}}_v)}\leq C\|\partial_z\frac{\omega}{r}\|_{L^{p,1}}.
$$
On peut ainsi contr\^oler la norme de $\omega$ dans tout $L^p,$ 
rappelons que $\omega$ v\'erifie l'\'equation suivante
$$
\partial_t\omega+u\nabla\omega-\frac{u^r}{r} \omega-\partial_z^2\omega=0
$$
Donc pour $1<p\leq3/2$, on a
$$
\begin{aligned}
\frac12\frac{d}{dt}\||\omega(t)|^{p/2}\|_{L^2}^{2}
+\|\partial_z(|\omega|^{p/2})\|^2_{L^2}
&\leq
\int |\frac{u^r}{r}||\omega|^{p/2}|\omega|^{p/2}
\\&
\leq 
\|\frac{u^r}{r}\|_{L^\infty_hL^{\frac{p}{3-2p}}_v}
\||\omega|^{p/2}\|_{L^2_h(L^{\frac{2p}{3(p-1)}}_v)}^2.
\end{aligned}
$$
Comme $H^s(\R_v)\subset L^{\frac{2p}{3p-3}}(\R_v)$ pour 
$s=(3-2p)/(2p)$, alors
$$
\||\omega|^{p/2}\|_{L^2_h(L^{\frac{2p}{3p-3}}_v)}^2 
\leq 
\||\omega|^{p/2}\|_{L^2}^{(4p-3)/p}
\|\partial_z(|\omega|^{p/2})\|_{L^2}^{(3-2p)/p}.
$$
Donc
\begin{align*}
\frac12\frac{d}{dt}\||\omega(t)|^{p/2}\|_{L^2}^{2}
+\|\partial_z(|\omega|^{p/2})\|^2_{L^2}&
\leq 
\frac12\|\frac{u^r}{r}\|_{L^\infty_h(L^{\frac{p}{3-2p}}_v)}^{\frac{2p}{4p-3}}
\||\omega|^{p/2}\|^2_{L^2}
+\frac12\|\partial_z(|\omega|^{p/2})\|_{L^2}^2\\
&\leq 
C\|\partial_z\frac{\omega}{r}\|_{L^{p,1}}^{\frac{2p}{4p-3}}
\||\omega|^{p/2}\|^2_{L^2}
+\frac12\|\partial_z(|\omega|^{p/2})\|_{L^2}^2.
\end{align*}
Par le lemme de Gronwall et vu que 
$\|\partial_z\frac{\omega}{r}\|_{L^{\frac{2p}{4p-3}}_t(L^{p,1})}
\leq 
t^{\frac{3(p-1)}{4p-3}}\|\frac{\omega_0}{r}\|_{L^{p,1}},$ 
nous obtenons
$$
\|\omega\|_{L^p}
+\|\partial_z \omega\|_{L^2_t(L^p)}
\leq \|\omega_0\|_{L^p}
\exp(Ct^{\frac{3(p-1)}{4p-3}}\|\frac{\omega_0}{r}\|_{L^{p,1}}),
$$
et par interpolation
$$
\|\omega\|_{L^{p,1}}+\|\partial_z \omega\|_{L^2_t(L^{p,1})}
\leq 
\|\omega_0\|_{L^{p,1}}
\exp(Ct^{\frac{3(p-1)}{4p-3}}\|\frac{\omega_0}{r}\|_{L^{p,1}}).
$$

\

En particulier l'in\'egalit\'e pr\'ec\'edente est valable pour $p=6/5,$ ainsi on peut montrer l'existence globale d'une solution lorsque 
$\frac{\omega}{r}\in L^{\frac 65+,1}$ et 
$\omega_0\in L^{\frac 65+,1}$. Tout d'abord, on note que 
$\omega_0\in L^{\frac 65,1}$ implique que $u_0\in L^2$ et par l'estimation d'\'energie on a
$$
\|u(t)\|^2_{L^2}+2\int_0^t\|\partial_z u\|^2_{L^2}
\leq 
\|u_0\|^2_{L^2}.
$$
D'autre part, comme $\omega\in L^\infty_t(L^{\frac 65+,1})$ et
$\|\omega\|_{L^p}\approx\|\nabla u\|_{L^p}$ pour $1<p<+\infty,$ alors
$u\in L^\infty_t(\dot W^{1,\frac 65+})$ et donc finalement $u\in L^\infty_t(W^{1,\frac 65+}(\R^3))$ qui est un sous espace de $L^\infty_t(L^2(\R^3))$ avec l'inclusion compacte dans la topologie de 
$L^2_{loc}(\R^3)$ \`a $t$ fix\'e.  Donc, on peut construire la solution en utilisant uniquement  $\omega\in L^{\frac 65}\cap L^{\frac 65+,1}$ et 
$\frac{\omega}{r}\in L^{\frac 65}\cap L^{\frac 65+,1}$ par passage  \`a la limite dans 
une suite des solutions approch\'ees,  axisym\'etriques et  r\'eguli\`eres de l'\'equation
$$
\partial_t u+ \text{div\,}(u\otimes u)-\partial_z^2 u=-\nabla p.
$$
Plus pr\'ecisement, soit $u_0\in L^2(\R^3)$ de sorte que $\omega_0\in L^{\frac 65}\cap L^{\frac 65+,1}$ et $\frac{\omega_0}{r}\in L^{\frac 65}\cap L^{\frac 65+,1}$. Soit $J_n$ l'op\'erateur de troncature sur les basses fr\'equences d\'efini par $J_n u={\mathcal F}^{-1}(\chi(\xi 2^{-n}){\mathcal F}u(\xi))$, o\`u $\mathcal F$ d\'enote la transform\'ee de Fourier et $\chi$ est une fonction radiale r\'eguli\`ere qui vaut $1$ sur une boule autour de z\'ero. On sait que pour  $u_0$  champ axisym\'etrique sans swirl on a $J_n u_0$ est aussi axysim\'etrique sans swirl et r\'eguli\`er (voir \cite{AHK}). Donc,  il existe un unique solution globale  r\' eguli\`ere, axisym\'etrique et sans swirl $u^n$ solution du probl\`eme
\begin{equation*}(NS_{n})
\begin{cases}
\partial_t u_n+\text{div\,}(u_n\otimes u_n)-n^{-1}\Delta_h u_n-\partial_3^2 u_n=-\nabla p_n\\
\text{div\,} u_n=0\\
u_n|_{t=0}=J_n u_0.
\end{cases}
\end{equation*}
En tenant compte du fait que $J_n\omega_0$ et $\frac{J_n\omega_0}{r}$ sont uniform\'ement born\'es dans $L^{\frac 65}\cap L^{\frac 65+,1}$ (voir \cite{Danchin}) nous obtenons que $u_n$ est une suite uniform\'ement born\'ee dans $L^\infty_t(W^{1,\frac 65+}(\R^3))$. En utilisant l'\'equation v\'erifi\'ee par  $u_n$ on trouve ais\'ement que $\partial_t u_n$ est born\'ee dans $L^\infty_t(H^{-N})$ pour $N$ assez grand. En tenant compte du fait que l'inclusion $W^{1,\frac 65+}(\R^3)$ dans $L^2_{loc}(\R^3)$ est compacte et comme $u_n$ est born\'ee dans 
$C_{loc}(H^{-N})$ nous obtenons par le lemme de Arzela-Ascoli, quitte \`a extraire une sous suite, que $u_n$ converge fortement vers un $u$ dans $C_{loc}(H^{-N}_{loc})$. En interpolant avec le fait que $u_n$ est suite born\'ee dans $L^\infty(W^{1,\frac65 +})$ on trouve que 
$u_N\to u$ dans $L^\infty_{loc}(L^2(\R^3))$. Cela suffit pour passer 
\`a la limite dans les termes non-lin\'eaires et on trouve que 
$u_n\otimes u_n\to u\otimes u$ dans ${\mathcal D}'.$ 
Finalement, par passage \`a la limite dans 
$(NS_n)$ nous obtenons une solution globale axisym\'etrique sans swirl $u$ de $(NS_v)$.

\vspace{2cm}

\end{document}